\documentclass{amsart}
\usepackage{amssymb, amsmath, amsthm, graphics, comment, xspace, enumerate}
\baselineskip 18pt

\vfuzz2pt 
\hfuzz2pt 
\newtheorem{thm}{Theorem}[section]

\newtheorem{lem}[thm]{Lemma}
\newtheorem{prop}[thm]{Proposition}
\theoremstyle{definition}
\newtheorem{defn}[thm]{Definition}

\theoremstyle{remark}
\newtheorem{rem}[thm]{Remark}
\numberwithin{equation}{section}


\begin{document}
\title[Degenerate $C$-distribution semigroups in lcs]{Degenerate $C$-distribution semigroups in locally convex spaces}
\author{Marko Kosti\' c}
\address{Faculty of Technical Sciences,
University of Novi Sad,
Trg D. Obradovi\' ca 6, 21125 Novi Sad, Serbia}
\email{marco.s@verat.net}

\author{Stevan Pilipovi\' c}
\address{Department for Mathematics and Informatics,
University of Novi Sad,
Trg D. Obradovi\' ca 4, 21000 Novi Sad, Serbia}
\email{pilipovic@dmi.uns.ac.rs}

\author{Daniel Velinov}
\address{Department for Mathematics, Faculty of Civil Engineering, Ss. Cyril and Methodius University, Skopje,
Partizanski Odredi
24, P.O. box 560, 1000 Skopje, Macedonia}
\email{velinovd@gf.ukim.edu.mk}

{\renewcommand{\thefootnote}{} \footnote{2010 {\it Mathematics
Subject Classification.} 47D03, 47D06, 47D60, 47D62, 47D99.
\\ \text{  }  \ \    {\it Key words and phrases.} Degenerate $C$-distribution semigroups,
degenerate integrated $C$-semigroups, multivalued linear operators, locally convex spaces.
\\  \text{  }  \ \ This research is partially supported by grant 174024 of Ministry
of Science and Technological Development, Republic of Serbia.}}

\begin{abstract}
The main purpose of this paper is to investigate degenerate $C$-distribution semigroups in the setting of barreled
sequentially complete locally convex spaces.
In our approach, the infinitesimal generator of a degenerate $C$-distribution semigroup is a multivalued linear operator
and the regularizing operator $C$ is not necessarily injective. We provide a few important theoretical novelties, considering also exponential subclasses of degenerate $C$-distribution semigroups.
\end{abstract}
\maketitle

\section{Introduction and Preliminaries}

In an our recent paper \cite{C-ultra}, we have introduced and systematically analyzed
the classes of $C$-distribution semigroups and $C$-ultradistribution semigroups in locally convex spaces (cf. \cite{b42}-\cite{cizi}, \cite{fat1},
\cite{ki90}, \cite{k92}-\cite{knjigaho}, \cite{ku112}-\cite{li121},
\cite{me152}, \cite{1964}-\cite{w241}  and references cited therein).
The main aim of this paper is to continue this research by investigating the classes of degenerate $C$-distribution semigroups in the setting of barreled
sequentially complete locally convex spaces (cf. \cite{carol}, \cite{faviniyagi}, \cite{FKP},
\cite{me152} and \cite{svir-fedorov} for further information about well-posedness of abstract degenerate differential equations of first order). As mentioned in the abstract, we consider multivalued linear operators as
infinitesimal generators of such semigroups and allow the regularizing operator $C$ to be non-injective (cf. \cite{baskakov-chern}, \cite{ki90}, \cite{ku112}, \cite{isna-maiz} and
\cite{me152}-\cite{me155}  for the primary source of information on degenerate distribution semigroups in Banach spaces). In contrast to the analyses carried out in \cite[Section 2.2]{me152} and \cite[Section 3]{baskakov-chern}, we do not use any decomposition of the state space
$E.$

The organization of paper can be briefly described as follows. After explaining the basic things about vector-valued generalized function spaces necessary for our further work, in Section 2 we take a preliminary look at multivalued linear operators in locally convex spaces. In Section 3, we repeat some known facts and definitions about fractionally integrated
$C$-semigroups in locally convex spaces and their subgenerators (integral generators). Our main results are contained in Section 4, in which we analyze various themes concerning degenerate $C$-distribution semigroups in locally convex spaces and further generalize some of our recent results from \cite{C-ultra}.
The studies of differential and analytical properties of degenerate $C$-distribution semigroups as well as degenerate $q$-exponential $C$-distribution semigroups in locally convex spaces is out of the scope of this paper.

\subsection{Notation}
Unless specified otherwise,
we assume
that $E$ is a Hausdorff sequentially complete
locally convex space over the field of complex numbers, SCLCS for short.  Our standing assumption henceforth will be that
the state space $ E$ is barreled.
By
$L(E)$ we denote the space consisting of all continuous linear mappings from $E$ into
$E.$ The symbol $\circledast_{E}$ ($\circledast$, if there is no risk for confusion) denotes the fundamental system of seminorms which defines the topology of $E.$
The Hausdorff locally convex topology on $E^{\ast},$ the dual space\index{dual space} of $E,$
defines the
system $(|\cdot|_{B})_{B\in {\mathcal B}}$ of seminorms on
$E^{\ast},$ where $|x^{\ast}|_{B}:=\sup_{x\in
B}|\langle x^{\ast}, x \rangle |,$ $x^{\ast} \in E^{\ast},$ $B\in
{\mathcal B}.$ The bidual of $E$ is denoted by $E^{\ast \ast}.$ 
Recall, the polars of nonempty sets $M\subseteq E$ and $N\subseteq E^*$ are defined as follows
$M^{\circ}:=\{y\in E^*:|y(x)|\leq 1\text{ for all } x\in M\}$ and
$N^{\circ}:=\{x\in E:\;|y(x)|\leq 1\text{ for all } y\in N\}.$

Now we shall briefly described the main definitions and properties of vector-valued generalized function spaces used henceforth;
cf.
\cite{a43}, \cite{b42}, \cite{fat1},  \cite{k91}-\cite{k82},  \cite{knjigah}, \cite{kothe1},  \cite{ku113}, \cite{martinez}-\cite{meise}, \cite{me152}, \cite{pilip}-\cite{sch16}
 and references cited therein for more details. The Schwartz spaces of test functions $\mathcal{D}=C_0^{\infty}(\mathbb{R})$, ${\mathcal S}(\mathbb{R})$
and $\mathcal{E}=C^{\infty}(\mathbb{R})$
carry the usual topologies.
If $\emptyset \neq \Omega  \subseteq {\mathbb R},$ then the symbol $\mathcal{D}_{\Omega}$ denotes the subspace of $\mathcal{D}$ consisting of those functions $\varphi \in \mathcal{D}$ for which supp$(\varphi) \subseteq \Omega;$ $\mathcal{D}_{0}\equiv \mathcal{D}_{[0,\infty)}.$
The spaces
$\mathcal{D}'(E):=L(\mathcal{D},E)$,
$\mathcal{E}'(E):=L(\mathcal{E},E)$ and
$\mathcal{S}'(E):=L(\mathcal{S},E)$
are topologized in the usual way; the symbols
$\mathcal{D}'_{\Omega}(E)$,
$\mathcal{E}'_{\Omega}(E)$ and $\mathcal{S}'_{\Omega}(E)$ denote the subspaces of
$\mathcal{D}'(E)$, $\mathcal{E}'(E)$ and $\mathcal{S}'(E)$,
respectively, containing $E$-valued distributions
whose supports are contained in $\Omega ;$ $\mathcal{D}'_{0}(E)\equiv \mathcal{D}'_{[0,\infty)}(E)$, $\mathcal{E}'_{0}(E)\equiv \mathcal{E}'_{[0,\infty)}(E)$, $\mathcal{S}'_{0}(E)\equiv \mathcal{S}'_{[0,\infty)}(E).$
If $E={\mathbb C},$ then the above spaces are also denoted by $\mathcal{D}',$
$\mathcal{E}',$
$\mathcal{S}',$ $\mathcal{D}'_{\Omega},$
$\mathcal{E}'_{\Omega},$
$\mathcal{S}'_{\Omega},$
$\mathcal{D}_0'$,
$\mathcal{E}_0'$ and $\mathcal{S}_0'.$
By a regularizing
sequence in $\mathcal{D}$ we mean any sequence $(\rho_n)_{n\in {\mathbb N}}$ in
$\mathcal{D}_0$ for which there exists a function $\rho\in\mathcal{D}$ satisfying $\int_{-\infty}^{\infty}\rho
(t)\,dt=1,$ supp$(\rho)\subseteq [0,1]$ and $\rho_n(t)=n\rho(nt)$,
$t\in\mathbb{R},$ $n\in {\mathbb N}.$
If $\varphi$, $\psi:\mathbb{R}\to\mathbb{C}$ are
locally integrable functions, then we define the convolution products $\varphi*\psi$
and $\varphi*_0\psi$ by
$$
\varphi*\psi(t):=\int\limits_{-\infty}^{\infty}\varphi(t-s)\psi(s)\,ds\mbox{ and }
\varphi*_0
\psi(t):=\int\limits^t_0\varphi(t-s)\psi(s)\,ds,\;t\;\in\mathbb{R}.
$$
Notice that $\varphi*\psi=\varphi*_0\psi$, provided that supp$(\varphi)$ and supp$(\psi)$ are subsets of $[0,\infty).$
Given $\varphi\in\mathcal{D}$ and $f\in\mathcal{D}'$, or $\varphi\in\mathcal{E}$ and $f\in\mathcal{E}'$,
we define the convolution $f*\varphi$ by $(f*\varphi)(t):=f(\varphi(t-\cdot))$, $t\in\mathbb{R}$.
For $f\in\mathcal{D}'$, or for $f\in\mathcal{E}'$,
define $\check{f}$ by $\check{f}(\varphi):=f(\varphi (-\cdot))$, $\varphi\in\mathcal{D}$ ($\varphi\in\mathcal{E}$).
Generally, the convolution of two distribution $f$, $g\in\mathcal{D}'$, denoted by $f*g$,
is defined by $(f*g)(\varphi):=g(\check{f}*\varphi)$, $\varphi\in\mathcal{D}$.
If one of them belongs to ${\mathcal E}'({\mathbb R})$, then we know that $f*g\in\mathcal{D}'$ and supp$(f*g)\subseteq$supp$ (f)+$supp$(g)$.

Let $G$ be an $E$-valued distribution, and let $f : {\mathbb R} \rightarrow E$ be a locally integrable function.
As in the scalar-valued case, we define the $E$-valued distributions
$G^{(n)}$ ($n\in {\mathbb N}$) and $ hG$ ($h\in {\mathcal E}$); the regular $E$-valued distribution ${\mathbf f}$ is defined by ${\mathbf f}(\varphi):=\int_{-\infty}^{\infty}\varphi (t)  f(t) \, dt$ ($\varphi \in {\mathcal D}$).
We need the following auxiliary lemma whose proof can be deduced as in the scalar-valued case.

\begin{lem}\label{polinomi}
Suppose that $0<\tau \leq \infty,$ $n\in {\mathbb N}$. If $f : (0,\tau) \rightarrow E$ is a continuous function and
$$
\int \limits^{\tau}_{0}\varphi^{(n)}(t)f(t)\, dt=0,\quad \varphi \in {\mathcal D}_{(0,\tau)},
$$
then there exist elements $x_{0},\cdot \cdot \cdot, x_{n-1}$ in $E$ such that $f(t)=\sum^{n-1}_{j=0}t^{j}x_{j},$ $t\in (0,\tau).$
\end{lem}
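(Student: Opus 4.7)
The plan is to reduce the statement to the scalar-valued case by testing against continuous linear functionals and then to reconstruct the vectors $x_{0},\dots,x_{n-1}$ via polynomial interpolation.

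First, I would fix an arbitrary $x^{\ast}\in E^{\ast}$ and set $g_{x^{\ast}}(t):=\langle x^{\ast},f(t)\rangle$ for $t\in(0,\tau)$. Then $g_{x^{\ast}}$ is continuous on $(0,\tau)$, and from the hypothesis (after pulling the scalar functional inside the $E$-valued integral, which is legitimate since $x^{\ast}$ is continuous and linear) one gets
\[
\int_{0}^{\tau}\varphi^{(n)}(t)\,g_{x^{\ast}}(t)\,dt=\Bigl\langle x^{\ast},\int_{0}^{\tau}\varphi^{(n)}(t)f(t)\,dt\Bigr\rangle=0,\quad \varphi\in\mathcal{D}_{(0,\tau)}.
\]
In the language of scalar distributions this says that the $n$-th distributional derivative of the regular distribution associated with $g_{x^{\ast}}$ vanishes on $(0,\tau)$. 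By the classical scalar lemma, $g_{x^{\ast}}$ is a polynomial of degree $<n$, so there exist complex numbers $a_{0}(x^{\ast}),\dots,a_{n-1}(x^{\ast})$ with
\[
\langle x^{\ast},f(t)\rangle=\sum_{j=0}^{n-1}a_{j}(x^{\ast})\,t^{j},\quad t\in(0,\tau).
\]

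Second, I would upgrade this scalar identity to a vector identity by Vandermonde interpolation. Choose once and for all $n$ distinct points $t_{1},\dots,t_{n}\in(0,\tau)$. The Vandermonde matrix $(t_{k}^{j})_{j,k}$ is invertible, so there exist scalars $c_{jk}$ (depending only on the $t_{k}$) such that for every polynomial $P(t)=\sum_{j=0}^{n-1}\alpha_{j}t^{j}$ one has $\alpha_{j}=\sum_{k=1}^{n}c_{jk}P(t_{k})$. Applied to $P=g_{x^{\ast}}$ this yields
\[
a_{j}(x^{\ast})=\sum_{k=1}^{n}c_{jk}\,\langle x^{\ast},f(t_{k})\rangle=\Bigl\langle x^{\ast},\sum_{k=1}^{n}c_{jk}f(t_{k})\Bigr\rangle.
\]
Define $x_{j}:=\sum_{k=1}^{n}c_{jk}f(t_{k})\in E$. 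Then for every $x^{\ast}\in E^{\ast}$ and every $t\in(0,\tau)$,
\[
\Bigl\langle x^{\ast},f(t)-\sum_{j=0}^{n-1}t^{j}x_{j}\Bigr\rangle=\sum_{j=0}^{n-1}t^{j}a_{j}(x^{\ast})-\sum_{j=0}^{n-1}t^{j}\langle x^{\ast},x_{j}\rangle=0.
\]
Since $E$ is a Hausdorff locally convex space, $E^{\ast}$ separates points of $E$, hence $f(t)=\sum_{j=0}^{n-1}t^{j}x_{j}$ for all $t\in(0,\tau)$, as required.

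The only subtle point, and the one I would verify carefully, is that the scalar test applied to the vector integral is legitimate; this is routine since the $E$-valued integral $\int_{0}^{\tau}\varphi^{(n)}(t)f(t)\,dt$ exists (the integrand has compact support in $(0,\tau)$ and $E$ is sequentially complete) and scalar functionals commute with such integrals. Once this is granted, the argument reduces cleanly to the scalar lemma plus finite-dimensional linear algebra, and no genuine obstacle remains.
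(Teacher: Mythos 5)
Your proof is correct, and it follows exactly the route the paper indicates: the paper gives no proof at all, remarking only that the lemma ``can be deduced as in the scalar-valued case,'' and your reduction via continuous linear functionals to the classical scalar fact (a continuous function whose $n$-th distributional derivative vanishes on an interval is a polynomial of degree $<n$), followed by Vandermonde interpolation and separation of points by $E^{\ast}$, is a complete and clean way of carrying out that deduction. The two points needing care --- that $x^{\ast}$ commutes with the $E$-valued Riemann integral of a compactly supported continuous integrand in a sequentially complete space, and that the coefficient functionals $a_{j}(x^{\ast})$ are realized by fixed vectors $x_{j}\in E$ --- are both handled properly.
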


Following L. Schwartz \cite{sch16}, it will be said that a distribution $G\in {\mathcal D}'(X)$ is of finite order on the interval $(-\tau,\tau)$ iff
there exist an integer $n\in {\mathbb N}_{0}$ and an $X$-valued continuous function $f : [-\tau,\tau] \rightarrow X$
such that
$$
G(\varphi)=(-1)^{n}\int^{\tau}_{-\tau}\varphi^{(n)}(t)f(t)\, dt,\quad \varphi \in {\mathcal D}_{(-\tau,\tau)},\,\, \tau>0.
$$
$G$ is of finite order iff $G$ is of finite order on any finite interval $(-\tau,\tau).$
In the case that $X$ is a quasi-complete (DF)-space, then it is well known
that each $X$-valued distribution is of finite order. \\

We refer the reader to \cite{C-ultra} for some characterizations of vector-valued distributions supported by a point.
If the space $E$ satisfies the property that any vector-valued distribution $G\in\mathcal{D}'(E)$ with supp$(G)\subseteq\{0\}$ can be represented as a finite sum of
vector-valued distributions of form $\delta^{(i)} \otimes x_{i}$, then we
say that $E$ is admissible.

\section[Multivalued linear operators]{Multivalued linear operators}

In this section, we present some definitions and properties of multivalued linear operators that will be necessary for our further work (cf. the monographs \cite{cross} by R. Cross and \cite{faviniyagi} by A. Favini-A. Yagi for more details on the subject).
The underlying SCLCS will be denoted
by $X$ and $Y;$ in the third section, we will coming back to our standing notation.

A multivalued map (multimap) ${\mathcal A} : X \rightarrow P(Y)$ is said to be a multivalued
linear operator (MLO) iff the following holds:
\begin{itemize}
\item[(i)] $D({\mathcal A}) := \{x \in X : {\mathcal A}x \neq \emptyset\}$ is a subspace of $X$;
\item[(ii)] ${\mathcal A}x +{\mathcal A}y \subseteq {\mathcal A}(x + y),$ $x,\ y \in D({\mathcal A})$
and $\lambda {\mathcal A}x \subseteq {\mathcal A}(\lambda x),$ $\lambda \in {\mathbb C},$ $x \in D({\mathcal A}).$
\end{itemize}
If $X=Y,$ then it is also said that ${\mathcal A}$ is an MLO in $X.$
An almost immediate consequence of the definition is that,
for every $x,\ y\in D({\mathcal A})$ and for every $\lambda,\ \eta \in {\mathbb C}$ with $|\lambda| + |\eta| \neq 0,$ we
have $\lambda {\mathcal A}x + \eta {\mathcal A}y = {\mathcal A}(\lambda x + \eta y).$ If ${\mathcal A}$ is an MLO, then ${\mathcal A}0$ is a linear manifold in $Y$
and ${\mathcal A}x = f + {\mathcal A}0$ for any $x \in D({\mathcal A})$ and $f \in {\mathcal A}x.$ Set $R({\mathcal A}):=\{{\mathcal A}x :  x\in D({\mathcal A})\}.$
The set ${\mathcal A}^{-1}0 = \{x \in D({\mathcal A}) : 0 \in {\mathcal A}x\}$ is called the kernel\index{multivalued linear operator!kernel}
of ${\mathcal A}$ and it is denoted by $N({\mathcal A}).$ The inverse ${\mathcal A}^{-1}$ of an MLO is defined by
$D({\mathcal A}^{-1}) := R({\mathcal A})$ and ${\mathcal A}^{-1} y := \{x \in D({\mathcal A}) : y \in {\mathcal A}x\}$.\index{multivalued linear operator!inverse}
It is easily seen that ${\mathcal A}^{-1}$ is an MLO in $X,$ as well as that $N({\mathcal A}^{-1}) = {\mathcal A}0$
and $({\mathcal A}^{-1})^{-1}={\mathcal A}.$ If $N({\mathcal A}) = \{0\},$ i.e., if ${\mathcal A}^{-1}$ is
single-valued, then ${\mathcal A}$ is said to be injective.

For any mapping ${\mathcal A}: X \rightarrow P(Y)$ we define $\check{{\mathcal A}}:=\{(x,y) : x\in D({\mathcal A}),\ y\in {\mathcal A}x\}.$ Then ${\mathcal A}$ is an MLO iff $\check{{\mathcal A}}$ is a linear relation in $X\times Y,$ ($(x,\lambda y_1)+(x,\lambda y_2)=(x,\lambda y_1+\lambda y_2)$, for $x\in X$ and $y\in Y$) i.e., iff $\check{{\mathcal A}}$ is a subspace of $X \times Y.$ Since no confusion
seems likely, we will sometimes identify ${\mathcal A}$ with its graph. \index{linear relation}

If ${\mathcal A},\ {\mathcal B} : X \rightarrow P(Y)$ are two MLOs, then we define its sum ${\mathcal A}+{\mathcal B}$ by $D({\mathcal A}+{\mathcal B}) := D({\mathcal A})\cap D({\mathcal B})$ and $({\mathcal A}+{\mathcal B})x := {\mathcal A}x +{\mathcal B}x,$ $x\in D({\mathcal A}+{\mathcal B}).$
It can be simply checked that ${\mathcal A}+{\mathcal B}$ is likewise an MLO.\index{multivalued linear operator!sum}

Let ${\mathcal A} : X \rightarrow P(Y)$ and ${\mathcal B} : Y\rightarrow P(Z)$ be two MLOs, where $Z$ is an SCLCS. The product of ${\mathcal A}$
and ${\mathcal B}$ is defined by $D({\mathcal B}{\mathcal A}) :=\{x \in D({\mathcal A}) : D({\mathcal B})\cap {\mathcal A}x \neq \emptyset\}$ and\index{multivalued linear operator!product}
${\mathcal B}{\mathcal A}x:=
{\mathcal B}(D({\mathcal B})\cap {\mathcal A}x).$ Then ${\mathcal B}{\mathcal A} : X\rightarrow P(Z)$ is an MLO and
$({\mathcal B}{\mathcal A})^{-1} = {\mathcal A}^{-1}{\mathcal B}^{-1}.$ The scalar multiplication of an MLO ${\mathcal A} : X\rightarrow P(Y)$ with the number $z\in {\mathbb C},$ $z{\mathcal A}$ for short, is defined by
$D(z{\mathcal A}):=D({\mathcal A})$ and $(z{\mathcal A})(x):=z{\mathcal A}x,$ $x\in D({\mathcal A}).$ It is clear that $z{\mathcal A}  : X\rightarrow P(Y)$ is an MLO and $(\omega z){\mathcal A}=\omega(z{\mathcal A})=z(\omega {\mathcal A}),$ $z,\ \omega \in {\mathbb C}.$

The integer powers of an MLO ${\mathcal A} :  X\rightarrow P(X)$ is defined recursively as follows: ${\mathcal A}^{0}=:I;$ if ${\mathcal A}^{n-1}$ is defined, set $
D({\mathcal A}^{n}) := \bigl\{x \in  D({\mathcal A}^{n-1}) : D({\mathcal A}) \cap {\mathcal A}^{n-1}x \neq \emptyset \bigr\},
$
and
$
{\mathcal A}^{n}x := \bigl({\mathcal A}{\mathcal A}^{n-1}\bigr)x =\bigcup_{y\in  D({\mathcal A}) \cap {\mathcal A}^{n-1}x}{\mathcal A}y,\quad x\in D( {\mathcal A}^{n}).
$
It is well known that $({\mathcal A}^{n})^{-1} = ({\mathcal A}^{n-1})^{-1}{\mathcal A}^{-1} = ({\mathcal A}^{-1})^{n}=:{\mathcal A}^{-n},$ $n \in {\mathbb N}$
and $D((\lambda-{\mathcal A})^{n})=D({\mathcal A}^{n}),$ $n \in {\mathbb N}_{0},$ $\lambda \in {\mathbb C}.$ Moreover,
if ${\mathcal A}$ is single-valued, then the above definitions are consistent with the usual definition of powers of ${\mathcal A}.$

If ${\mathcal A} : X\rightarrow P(Y)$ and ${\mathcal B} : X\rightarrow P(Y)$ are two MLOs, then we write ${\mathcal A} \subseteq {\mathcal B}$ iff $D({\mathcal A}) \subseteq D({\mathcal B})$ and ${\mathcal A}x \subseteq {\mathcal B}x$
for all $x\in D({\mathcal A}).$ Assume now that
a linear single-valued operator $S : D(S) \subseteq X \rightarrow Y$ has domain $D(S) = D({\mathcal A})$ and $S \subseteq {\mathcal A},$ where ${\mathcal A} : X\rightarrow P(Y)$
is an MLO. Then $S$ is called a\index{multivalued linear operator!section}
section of ${\mathcal A};$ if this is the case, we have ${\mathcal A}x = Sx + {\mathcal A}0,$ $x \in D({\mathcal A})$ and
$R({\mathcal A}) = R(S) + {\mathcal A}0.$

We say that an MLO operator  ${\mathcal A} : X\rightarrow P(Y)$ is closed if for any
nets $(x_{\tau})$ in $D({\mathcal A})$ and $(y_{\tau})$ in $Y$ such that $y_{\tau}\in {\mathcal A}x_{\tau}$ for all $\tau\in I$ we have that $\lim_{\tau \rightarrow \infty}x_{\tau}=x$ and
$\lim_{\tau \rightarrow \infty}y_{\tau}=y$ imply
$x\in D({\mathcal A})$ and $y\in {\mathcal A}x.$\index{multivalued linear operator!closed}

If ${\mathcal A} : X\rightarrow P(Y)$ is an MLO, then we define the adjoint ${\mathcal A}^{\ast}: Y^{\ast}\rightarrow P(X^{\ast})$\index{multivalued linear operator!adjoint}
of ${\mathcal A}$ by its graph
$$
{\mathcal A}^{\ast}:=\Bigl\{ \bigl( y^{\ast},x^{\ast}\bigr)  \in Y^{\ast} \times X^{\ast} :  \bigl\langle y^{\ast},y \bigr\rangle =\bigl \langle x^{\ast}, x\bigr \rangle \mbox{ for all pairs }(x,y)\in {\mathcal A} \Bigr\}.
$$
It is simply verified that ${\mathcal A}^{\ast}$
is a closed MLO, and that $ \langle y^{\ast},y \rangle =0$ whenever $y^{\ast}\in D({\mathcal A}^{\ast})$ and $y\in {\mathcal A}0.$

Concerning the integration of functions with values in SCLCS, we follow the approach of C. Martinez and M. Sanz \cite[pp. 99-102]{martinez}. Denote by $\Omega$ a locally compact\index{space!locally compact} and separable metric space\index{space!separable metric}
and  by $\mu$ a locally finite
Borel measure\index{measure!locally finite Borel} defined on $\Omega.$ Then the following fundamental lemma holds:

\begin{lem}\label{integracija-tricky}
Suppose that ${\mathcal A} : X\rightarrow P(Y)$ is a closed \emph{MLO}. Let $f : \Omega \rightarrow X$ and $g : \Omega \rightarrow Y$ be $\mu$-integrable, and let $g(x)\in {\mathcal A}f(x),$ $x\in \Omega.$ Then $\int_{\Omega}f\, d\mu \in D({\mathcal A})$ and $\int_{\Omega}g\, d\mu\in {\mathcal A}\int_{\Omega}f\, d\mu.$
\end{lem}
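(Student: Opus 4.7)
My plan is to exploit the fact that a closed MLO is, by definition, a closed linear subspace of $X\times Y$. Define the graph $\check{\mathcal A}\subseteq X\times Y$ as in the paper, and consider the product space $X\times Y$ equipped with the product locally convex topology. Form the combined function $h:\Omega\to X\times Y$ by $h(x):=(f(x),g(x))$. Since $f$ and $g$ are $\mu$-integrable, $h$ is $\mu$-integrable as a function with values in the SCLCS $X\times Y$, and the integral decomposes componentwise as $\int_{\Omega}h\,d\mu=\bigl(\int_{\Omega}f\,d\mu,\int_{\Omega}g\,d\mu\bigr)$. The hypothesis $g(x)\in{\mathcal A}f(x)$ for all $x\in\Omega$ translates to $h(x)\in\check{\mathcal A}$ for all $x\in\Omega$. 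Hence the lemma reduces to showing that $\int_{\Omega}h\,d\mu\in\check{\mathcal A}$, i.e., that the integral of a function taking values in a closed linear subspace of an SCLCS remains in that subspace.

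For this, I would use the Hahn--Banach theorem in the locally convex setting. Since $\check{\mathcal A}$ is a closed linear subspace of the Hausdorff locally convex space $X\times Y$, it coincides with the intersection of the kernels of all continuous linear functionals on $X\times Y$ vanishing on it. Every continuous linear functional $\Phi$ on $X\times Y$ has the form $\Phi(x,y)=\langle x^{\ast},x\rangle+\langle y^{\ast},y\rangle$ for some $x^{\ast}\in X^{\ast}$, $y^{\ast}\in Y^{\ast}$. If such a $\Phi$ vanishes on $\check{\mathcal A}$, then $\langle x^{\ast},f(t)\rangle+\langle y^{\ast},g(t)\rangle=0$ for every $t\in\Omega$. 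By the defining property of the Martinez--Sanz integral, continuous linear functionals commute with integration, so
$$
\Bigl\langle x^{\ast},\int_{\Omega}f\,d\mu\Bigr\rangle+\Bigl\langle y^{\ast},\int_{\Omega}g\,d\mu\Bigr\rangle=\int_{\Omega}\bigl[\langle x^{\ast},f(t)\rangle+\langle y^{\ast},g(t)\rangle\bigr]\,d\mu(t)=0.
$$
Since $\Phi$ was arbitrary among continuous linear functionals annihilating $\check{\mathcal A}$, we conclude $\bigl(\int_{\Omega}f\,d\mu,\int_{\Omega}g\,d\mu\bigr)\in\check{\mathcal A}$, which by definition of $\check{\mathcal A}$ is exactly the assertion $\int_{\Omega}f\,d\mu\in D({\mathcal A})$ and $\int_{\Omega}g\,d\mu\in{\mathcal A}\int_{\Omega}f\,d\mu$.

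The only potential obstacle is verifying that the Martinez--Sanz integration theory on $X\times Y$ actually yields the componentwise decomposition of the integral and the scalar commutation property $\langle \phi,\int f\,d\mu\rangle=\int \langle \phi,f\rangle\,d\mu$ for continuous linear $\phi$; but both are standard features of that construction (the integral is defined precisely so as to be compatible with continuous linear maps), and the product $X\times Y$ of two SCLCSs is again an SCLCS, so no new completeness or Hausdorff issues arise. Barreledness of the state space is not needed at this level of generality; the argument rests only on closedness of $\check{\mathcal A}$ and Hahn--Banach separation.
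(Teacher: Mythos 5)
Your proof is correct. Note first that the paper itself supplies no argument for Lemma \ref{integracija-tricky}: it is stated as a known consequence of the Martinez--Sanz integration theory, being the multivalued analogue of Hille's theorem on closed operators and integrals. The canonical proof behind that citation runs differently from yours: one approximates $\int_{\Omega}f\,d\mu$ and $\int_{\Omega}g\,d\mu$ simultaneously by integrals of simple functions built from the same partitions, observes that each such pair of approximants lies in $\check{{\mathcal A}}$ because $\check{{\mathcal A}}$ is a linear subspace of $X\times Y$, and then invokes the net-closedness of ${\mathcal A}$ (exactly as defined in Section 2) to pass to the limit. Your route replaces this constructive approximation by Hahn--Banach duality: you use that a closed subspace of a Hausdorff locally convex space is the intersection of the kernels of the continuous functionals annihilating it, that $(X\times Y)^{\ast}$ splits as $X^{\ast}\times Y^{\ast}$, and that the Martinez--Sanz integral commutes with continuous linear functionals. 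All of these ingredients are valid here ($X\times Y$ is again an SCLCS, and the closedness of the MLO is precisely closedness of its graph in the product topology), so the argument goes through; as you note, barreledness plays no role. The trade-off is that your proof leans only on the weak characterization of the integral and is shorter, whereas the approximation argument uses the definition of integrability directly and avoids even the mild question of whether the pair $(f,g)$ is integrable as an $X\times Y$-valued function --- a question you could also sidestep entirely by applying the functional identity componentwise to $\int_{\Omega}f\,d\mu$ and $\int_{\Omega}g\,d\mu$ separately, without ever forming $\int_{\Omega}h\,d\mu$.
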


In \cite{FKP}, we have recently considered the  $C$-resolvent sets of MLOs in locally convex spaces
(where $C\in L(X)$ is injective, $C{\mathcal A}\subseteq {\mathcal A}C$).
The
$C$-resolvent set of an MLO ${\mathcal A}$ in $X,$ $\rho_{C}({\mathcal A})$ for short, is defined as the union of those complex numbers
$\lambda \in {\mathbb C}$ for which $R(C)\subseteq R(\lambda-{\mathcal A})$ and
$(\lambda - {\mathcal A})^{-1}C$ is a single-valued bounded operator on $X.$
The operator $\lambda \mapsto (\lambda -{\mathcal A})^{-1}C$ is called the $C$-resolvent of ${\mathcal A}$ ($\lambda \in \rho_{C}({\mathcal A})$). In  this paper, we analyze the general situation in which the operator
$C\in L(X)$ is not necessarily injective. Then the operator $(\lambda - {\mathcal A})^{-1}C$ is no longer single-valued, which additionally hinders our considerations and work.

\section[Fractionally integrated $C$-semigroups...]{Fractionally integrated $C$-semigroups in locally convex spaces}\label{maxx}

In this section, we will collect the most important facts and definitions about (degenerate) fractionally integrated $C$-semigroups in locally convex spaces.
Observe that we do not require the injectiveness of operator $C\in L(E)$. Denote by $g_{\alpha}(t)=\frac{t^{\alpha-1}}{\Gamma(\alpha)}$ for $t>0$.

\begin{defn}\label{2.1.1.1'} (\cite{catania})
Let $0<\alpha <\infty$ and $0<\tau \leq \infty .$
A strongly continuous operator family $(S_\alpha(t))_{t\in [0,\tau)}\subseteq L(E)$ is called a (local, if $\tau<\infty$) $\alpha$-times integrated $C$-semigroup
iff the following holds:
\begin{itemize}
\item[(i)] $S_\alpha(t)C=CS_\alpha(t)$, $t\in [0,\tau),$ and
\item[(ii)] For all $x\in E$ and $t,\ s\in [0,\tau)$ with $t+s\in [0,\tau),$ we have
\begin{align*}
S_\alpha(t)S_\alpha(s)x=\Biggl[\int_0^{t+s}-\int_0^t-\int_0^s\Biggr]
g_{\alpha}(t+s-r)S_\alpha(r)Cx\,dr.
\end{align*}
\end{itemize}
\end{defn}

By a $C$-regularized semigroup ($0$-times integrated $C$-regularized semigroup) we mean any strongly continuous operator family $(S_0(t)\equiv S(t))_{t\in [0,\tau)}\subseteq L(E)$
satisfying that $S(t)C=CS(t)$, $t\in [0,\tau)$ and $S(t+s)C=S(t)S(s)$ for all $t,\ s\in [0,\tau)$ with $t+s\in [0,\tau).$ A global $C$-regularized semigroup
$(S(t))_{t\geq 0}$ is said to be
entire analytic \index{$(a,k)$-regularized $C$-resolvent family!entire} iff, for every $x\in E,$ the mapping $t\mapsto S(t)x,$ $t\geq
0$ can be analytically extended to the whole complex plane. We refer the reader to \cite{l1} for the most important applications of non-degenerate $C$-regularized semigroups.

Let $0<\alpha \leq \infty$. In
the case $\tau=\infty ,$ $(S_{\alpha}(t))_{t\geq 0}$ is said to be
exponentially equicontinuous\index{$(a,k)$-regularized $C$-resolvent family!exponentially equicontinuous} (equicontinuous\index{$(a,k)$-regularized $C$-resolvent family!equicontinuous}) iff there exists
$\omega \in {\mathbb R}$ ($\omega =0$) such that the family $\{
e^{-\omega t} S_{\alpha}(t) : t\geq 0\}$ is equicontinuous. The integral generator $\hat{{\mathcal A}}$ of $(S_{\alpha}(t))_{t\in [0,\tau)}$ is defined by its graph
\[
\hat{{\mathcal A}}:=\Biggl\{(x,y)\in E\times E:S_\alpha(t)x-g_{\alpha} (t)Cx=\int\limits_0^tS_\alpha(s)y\,ds,\; t\in [0,\tau) \Biggr\}.
\]
The integral generator $\hat{{\mathcal A}}$ of $(S_{\alpha}(t))_{t\in [0,\tau)}$ is a closed MLO in $E.$
Furthermore, $\hat{{\mathcal A}}\subseteq C^{-1}\hat{{\mathcal A}}C$ in the MLO sense, with the equality in the case that
the operator $C$ is injective.

By a subgenerator of $(S_{\alpha}(t))_{t\in [0,\tau)}$ we mean any MLO ${\mathcal A}$ in $E$ satisfying the following two conditions:
\begin{itemize}
\item[(A)] $S_{\alpha}(t)x-g_{\alpha+1} (t)Cx=\int_0^tS_{\alpha}(s)y\,ds,\mbox{ whenever }t\in [0,\tau)\mbox{ and }y\in {\mathcal A}x.$
\item[(B)] For all $x\in E$ and $t\in [0,\tau),$  we have $\int^{t}_{0}S_{\alpha}(s)x\, ds \in D({\mathcal A})$ and
$S_{\alpha}(t)x-g_{\alpha+1}(t)Cx\in {\mathcal A}\int_0^t S_{\alpha}(s)x\,ds.$
\end{itemize}
If $(S_{\alpha}^{1}(t))_{t\in [0,\tau)}\subseteq  L(E),$ resp. $(S_{\alpha}^{2}(t))_{t\in [0,\tau)}\subseteq  L(E),$ is strongly continuous and satisfies only (B), resp. (A), then we say that
$(S_{\alpha}^{1}(t))_{t\in [0,\tau)},$ resp. $(S_{\alpha}^{2}(t))_{t\in [0,\tau)},$ is an $\alpha$-times integrated $C$-existence family with a subgenerator ${\mathcal A},$ resp.,
$\alpha$-times integrated $C$-uniqueness family with a subgenerator ${\mathcal A}.$

We denote by $\chi(S_{\alpha})$
the set consisting of all subgenerators of the $\alpha$-times integrated $C$-semigroup $(S_{\alpha}(t))_{t\in [0,\tau)}.$
It is well known that $\chi(S_{\alpha})$
can have infinitely many elements; if ${\mathcal A}\in \chi(S_{\alpha})$, then
${\mathcal A}\subseteq \hat{{\mathcal A}}.$ In general, the set $\chi(S_{\alpha})$ can be empty and the integral generator of $(S_{\alpha}(t))_{t\in [0,\tau)}$
need not be a subgenerator of $(S_{\alpha}(t))_{t\in [0,\tau)}$ in the case that $\tau <\infty.$ In global case, the integral generator $\hat{{\mathcal A}}$ of $(S_{\alpha}(t))_{t\geq 0}$ is always its subgenerator. If ${\mathcal A}$ is a closed subgenerator of $(S_{\alpha}(t))_{t\in [0,\tau)},$ defined locally  or globally, then we know that $C{\mathcal A}\subseteq {\mathcal A}C,$ $\hat{{\mathcal A}}\subseteq C^{-1}{\mathcal A}C$ and that the injectivity of $C$
implies $\hat{{\mathcal A}}= C^{-1}{\mathcal A}C.$ Suppose that $C$ is injective
and ${\mathcal A}$ is an MLO. Then there exists at most one $\alpha$-times integrated $C$-semigroup $(S_\alpha(t))_{t\in [0,\tau)}$ which do have ${\mathcal A}$ as a subgenerator
(\cite{catania}).

\section{The basic properties of degenerate $C$-distribution semigroups in locally convex spaces}
Throughout this section, we assume that $C\in L(E)$ is not necessarily injective operator. Since $E$ is barreled, the uniform boundedness principle \cite[p. 273]{meise} implies that each ${\mathcal G}\in {\mathcal D}'(L(E))$ is boundedly equicontinuous, i.e., that for every $p\in \circledast$ and for every bounded subset $B$ of ${\mathcal D}$, there exist $c>0$ and
$q\in \circledast$ such that
$
p({\mathcal G}(\varphi)x)\leq cq(x),\ \varphi \in B, \ x\in E.
$

We start this section by introducing the following definition.

\begin{defn}\label{cuds}
Let $\mathcal{G}\in\mathcal{D}_0'(L(E))$ satisfy $C\mathcal{G}=\mathcal{G}C.$
Then it is said that $\mathcal{G}$ is a pre-(C-DS) iff the following holds:
\[\tag{C.S.1}
\mathcal{G}(\varphi*_0\psi)C=\mathcal{G}(\varphi)\mathcal{G}(\psi),\quad \varphi,\;\psi\in\mathcal{D}.
\]
If, additionally,
\[\tag{C.S.2}
\mathcal{N}(\mathcal{G}):=\bigcap_{\varphi\in\mathcal{D}_0}N(\mathcal{G}(\varphi))=\{0\},
\]
then $\mathcal{G}$ is called a $C$-distribution semigroup, (C-DS) in short.
A pre-(C-DS) $\mathcal{G}$ is called dense iff
\[\tag{C.S.3}
\mathcal{R}(\mathcal{G}):=\bigcup\limits_{\varphi\in\mathcal{D}_0}R(\mathcal{G}(\varphi))
\text{ is dense in } E.
\]
\end{defn}

If $C=I,$ then we also write pre-(DS),(DS), instead of pre-(C-DS), (C-DS).

Suppose that $\mathcal{G}$ is a pre-(C-DS). Then
$\mathcal{G}(\varphi)\mathcal{G}(\psi)=\mathcal{G}(\psi)\mathcal{G}(\varphi)$ for all $\varphi,\,\psi\in\mathcal{D}$,
and $\mathcal{N}(\mathcal{G})$ is a closed subspace of $E$.

The structural characterization of a pre-(C-DS) $\mathcal{G}$
on its kernel space
$\mathcal{N}(\mathcal{G})$ is described in the following theorem (cf.  \cite[Proposition 3.1.1]{knjigah} and the proofs of \cite[Lemma 2.2]{ku112}, \cite[Proposition 3.5.4]{knjigah}).

\begin{thm}\label{delta-point}
Let $\mathcal{G}$ be a pre-$($C-DS$)$, and let the space $L(\mathcal{N}(\mathcal{G}))$ be admissible.
Then, with $N=\mathcal{N}(\mathcal{G})$ and $G_1$ being the restriction of $\mathcal{G}$ to $N$ $(G_1=\mathcal{G}_{|N})$,
we have:
There exists an integer $m\in {\mathbb N}$ for which there exist unique operators $T_0$, $T_1,\dots,T_m\in L(\mathcal{N}(\mathcal{G}))$ commuting with $C$ so that
$G_1=\sum_{j=0}^m\delta^{(j)}\otimes T_j$, $T_iC^i=(-1)^iT_0^{i+1}$, $0\leq i \leq m-1$ and
$T_0T_m=T_0^{m+2}=0$.
\end{thm}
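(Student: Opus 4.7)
The plan is to identify $G_1$ as an operator-valued distribution on $N$ with support at $\{0\}$, then invoke the structure theorem for such distributions and extract the algebraic identities by matching coefficients in (C.S.1). The argument splits into three stages.

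\emph{Stage 1 (well-definedness of $G_1$ as an $L(N)$-valued distribution).} First I would check that $N$ is invariant under every $\mathcal{G}(\varphi)$, $\varphi \in \mathcal{D}$. Fix $x \in N$ and $\psi \in \mathcal{D}_0$. Applying (C.S.1) gives
\[
\mathcal{G}(\psi)\mathcal{G}(\varphi)x = \mathcal{G}(\psi *_0 \varphi)Cx.
\]
A direct inspection of the integral defining $\psi *_0 \varphi$ shows that for $t \leq 0$ the argument $t-s$ of $\psi$ is negative over the range of integration, so $\psi(t-s) = 0$; hence $\psi *_0 \varphi \in \mathcal{D}_0$. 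Since $C\mathcal{G} = \mathcal{G}C$ yields $Cx \in N$, the right-hand side vanishes. Therefore $\mathcal{G}(\varphi)x \in N$ and $G_1 := \mathcal{G}|_N$ is a well-defined element of $\mathcal{D}'(L(N))$.

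\emph{Stage 2 ($\operatorname{supp}(G_1) \subseteq \{0\}$).} The inclusion $\operatorname{supp}(G_1) \subseteq (-\infty, 0]$ is immediate from $G_1|_{\mathcal{D}_0} = 0$. For the reverse inclusion I would exploit that $\varphi *_0 \psi \in \mathcal{D}_{(-\infty,0)}$ whenever $\varphi, \psi \in \mathcal{D}_{(-\infty,0)}$, so the restricted semigroup law $G_1(\varphi *_0 \psi)C = G_1(\varphi)G_1(\psi)$, together with the commutativity $G_1(\varphi)G_1(\psi) = G_1(\psi)G_1(\varphi)$ and the bounded equicontinuity of $G_1$ furnished by the barreledness of $E$, tightly constrains $G_1$ on $\mathcal{D}_{(-\infty,0)}$. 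Following the pattern of \cite[Lemma 2.2]{ku112} and \cite[Proposition 3.5.4]{knjigah}, these ingredients force $G_1$ to be identically zero on $\mathcal{D}_{(-\infty,0)}$, yielding $\operatorname{supp}(G_1) \subseteq \{0\}$.

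\emph{Stage 3 (structure and algebraic identities).} Admissibility of $L(N)$ now provides a unique $m \in \mathbb{N}$ and unique $T_0,\dots,T_m \in L(N)$ with $G_1 = \sum_{j=0}^m \delta^{(j)} \otimes T_j$; comparing $CG_1 = G_1 C$ in this representation and invoking uniqueness forces $CT_j = T_j C$ for each $j$. To extract the algebraic identities, I would substitute the representation into (C.S.1), use the Leibniz-type computation
\[
(\varphi *_0 \psi)(0) = 0,\qquad (\varphi *_0 \psi)^{(k)}(0) = \sum_{j=0}^{k-1}\varphi^{(j)}(0)\,\psi^{(k-1-j)}(0)\ (k \geq 1),
\]
and then match the independent coefficients of $\varphi^{(a)}(0)\psi^{(b)}(0)$ to obtain
\[
T_a T_b = -T_{a+b+1}C\ (a+b \leq m-1),\qquad T_a T_b = 0\ (a+b \geq m).
\]
Taking $b = 0$ in the first identity yields the recursion $T_{i+1}C = -T_i T_0$; an induction on $i$ using $CT_0 = T_0 C$ then produces $T_i C^i = (-1)^i T_0^{i+1}$ for $0 \leq i \leq m-1$ (the same identity actually extends to $i = m$). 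The case $(a,b) = (0,m)$ of the second identity is $T_0 T_m = 0$; multiplying $T_m C^m = (-1)^m T_0^{m+1}$ on the right by $T_0$ and invoking $T_m T_0 = 0$ together with $[C,T_0]=0$ gives $T_0^{m+2} = 0$.

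\emph{Expected main obstacle.} The delicate step is stage 2. The trivial support bound $\operatorname{supp}(G_1) \subseteq (-\infty,0]$ is immediate, but ruling out nonzero mass on $(-\infty,0)$ cannot be obtained by naive mollification: for $\varphi \in \mathcal{D}_{(-\infty,0)}$ and any regularizer $\rho_n \in \mathcal{D}_0$ one already has $\rho_n *_0 \varphi \equiv 0$, so (C.S.1) degenerates into the tautology $0 = 0$. The required cancellation must be extracted from the full semigroup relation on $\mathcal{D}_{(-\infty,0)} \times \mathcal{D}_{(-\infty,0)}$, with the barreledness of $E$ controlling the local order of $G_1$ and admissibility of $L(N)$ ultimately converting point-support into the finite sum of $\delta^{(j)}$'s.
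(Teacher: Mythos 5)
Your Stages 1 and 3 are sound and follow essentially the route the paper points to through its citations (\cite[Lemma 2.2]{ku112}, \cite[Propositions 3.1.1, 3.5.4]{knjigah}): the invariance of $\mathcal{N}(\mathcal{G})$ under $\mathcal{G}(\varphi)$ and under $C$ is exactly as you argue, the admissibility hypothesis on $L(\mathcal{N}(\mathcal{G}))$ is invoked for precisely the purpose it was introduced, the jet formula $(\varphi *_0 \psi)^{(k)}(0)=\sum_{j=0}^{k-1}\varphi^{(j)}(0)\psi^{(k-1-j)}(0)$ agrees with the paper's identity \eqref{astur-gilberto} at $u=0$, and the coefficient matching, the recursion $T_{i+1}C=-T_iT_0$, the induction yielding $T_iC^i=(-1)^iT_0^{i+1}$, and the derivation of $T_0T_m=T_0^{m+2}=0$ are all correct.

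The defect is Stage 2, which you single out as the main obstacle: it is not an obstacle, and the mechanism you propose for it would not work. By Definition \ref{cuds} a pre-(C-DS) is by hypothesis an element of $\mathcal{D}'_0(L(E))=\mathcal{D}'_{[0,\infty)}(L(E))$, i.e.\ $\mathrm{supp}(\mathcal{G})\subseteq[0,\infty)$ is built into the definition; hence $G_1$ vanishes on $\mathcal{D}_{(-\infty,0)}$ for free, and combined with your (correct) observation that $G_1$ kills $\mathcal{D}_0$ by the very definition of $\mathcal{N}(\mathcal{G})$, one obtains $\mathrm{supp}(G_1)\subseteq\{0\}$ in one line. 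As written, your Stage 2 is a genuine gap: you defer to the pattern of \cite[Lemma 2.2]{ku112} for a cancellation on $\mathcal{D}_{(-\infty,0)}$ that, as you yourself observe in your closing paragraph, cannot be extracted from (C.S.1) there because $\varphi *_0\psi$ degenerates. Replace that paragraph by the appeal to $\mathcal{G}\in\mathcal{D}'_0(L(E))$ and the proof closes. (A minor caveat: the integer $m$ is only unique if normalized, e.g.\ by requiring $T_m\neq 0$; otherwise one may pad with zero operators.)
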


Let $\mathcal{G}\in\mathcal{D}_0'(L(E))$ and let $T\in\mathcal{E}_0'$
i.e., $T$ is a scalar-valued distribution with compact support contained in $[0,\infty)$.
Define
\[
G(T):=\Bigl\{(x,y) \in E\times E : \mathcal{G}(T*\varphi)x=\mathcal{G}(\varphi)y\;\mbox{ for all }\;\varphi\in\mathcal{D}_0 \Bigr\}.
\]
Then  it can be easily seen that $G(T)$ is a closed MLO; furthermore, if $\mathcal{G}\in\mathcal{D}_0'(L(E))$ satisfy (C.S.2), then $G(T)$ is a closed linear operator.
Assuming that the regularizing operator $C$ is injective, definition of $G(T)$ can be equivalently introduced by replacing the set
$\mathcal{D}_0$ with the set $\mathcal{D}_{[0,\epsilon)}$ for any $\epsilon>0.$ In general case,
for every $\psi\in\mathcal{D}$, we have $\psi_+:=\psi\mathbf{1}_{[0,\infty)}\in\mathcal{E}_0'$, where
$\mathbf{1}_{[0,\infty)}$ stands for the characteristic function of $[0,\infty),$ so that the definition of $G(\psi_+)$ is clear.
We define the (infinitesimal) generator of a pre-(C-DS) $\mathcal{G}$ by ${\mathcal A}:=G(-\delta')$ (cf. \cite{C-ultra} for more details about non-degenerate case, and \cite[Definition 3.4]{baskakov-chern} and \cite{ki90} for some other approaches
used in degenerate case). Then $\mathcal{N}(\mathcal{G}) \times \mathcal{N}(\mathcal{G}) \subseteq {\mathcal A}$ and $\mathcal{N}(\mathcal{G}) = {\mathcal A}0,$ which simply implies that
${\mathcal A}$ is single-valued iff (C.S.2)  holds. If this is the case, then we also have that the operator $C$ must be injective:
Suppose that $Cx=0$ for some $x\in E.$ By (C.S.1), we get that $\mathcal{G}(\varphi)\mathcal{G}(\psi)x=0,$ $\varphi,\;\psi\in\mathcal{D}.$ In particular, $\mathcal{G}(\psi)x\in {\mathcal N}({\mathcal G})=\{0\}$ so that $\mathcal{G}(\psi)x= 0,$ $\psi\in\mathcal{D}.$ Hence,  $x\in {\mathcal N}({\mathcal G})=\{0\}$ and therefore $x=0.$

Further on, if $\mathcal{G}$ is a pre-(C-DS), $T\in\mathcal{E}_0'$  and $\varphi\in\mathcal{D}$,
then ${\mathcal G}(\varphi)G(T)\subseteq G(T)\mathcal{G}(\varphi)$, $CG(T)\subseteq G(T)C$
and $\mathcal{R}(\mathcal{G})\subseteq D(G(T))$.
If $\mathcal{G}$ is a pre-(C-DS) and $\varphi$, $\psi\in\mathcal{D}$,
then the assumption $\varphi(t)=\psi(t)$, $t\geq 0$, implies $\mathcal{G}(\varphi)=\mathcal{G}(\psi)$.
As in the Banach space case, we can prove the following (cf. \cite[Proposition 3.1.3, Lemma 3.1.6]{knjigah}): Suppose that $\mathcal{G}$ is a pre-(C-DS). Then $(Cx,\mathcal{G}(\psi)x)\in G(\psi_+)$, $\psi\in\mathcal{D},$ $x\in E$  and ${\mathcal A}\subseteq C^{-1}{\mathcal A}C,$ while $ C^{-1}{\mathcal A}C={\mathcal A}$ provided that $C$ is injective.
Furthermore, the following holds:

\begin{prop}\label{isto}
Let ${\mathcal G}$ be a pre-(C-DS), $S$, $T\in\mathcal{E}'_0$, $\varphi\in\mathcal{D}_0$, $\psi\in\mathcal{D}$
 and $x\in E$.
Then we have:
\begin{itemize}
\item[(i)] $(\mathcal{G}(\varphi)x$, $\mathcal{G}(\overbrace{T*\cdots*T}^m*\varphi)x)\in G(T)^m$, $m\in\mathbb{N}$.
\item[(ii)] $G(S)G(T)\subseteq G(S*T)$ with $D(G(S)G(T))=D(G(S*T))\cap D(G(T))$, and $G(S)+G(T)\subseteq G(S+T)$.
\item[(iii)] $(\mathcal{G}(\psi)x$, $\mathcal{G}(-\psi^{\prime})x-\psi(0)Cx)\in G(-\delta')$.
\item[(iv)] If $\mathcal{G}$ is dense, then its generator is densely defined.
\end{itemize}
\end{prop}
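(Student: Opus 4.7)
The plan is uniform across the four items: in each one, test the claimed membership against an arbitrary $\varphi \in \mathcal{D}_{0}$ and reduce to the fundamental relation (C.S.1) combined with standard manipulations of convolutions (associativity, commutativity, integration by parts). Throughout, I use tacitly that $T * \phi \in \mathcal{D}_{0}$ whenever $T\in \mathcal{E}'_{0}$ and $\phi\in \mathcal{D}_{0}$, that $*$ and $*_{0}$ coincide on such pairs (since all supports lie in $[0,\infty)$), and that every $\varphi\in \mathcal{D}_{0}$ satisfies $\varphi^{(k)}(0)=0$ for all $k\geq 0$.

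For (i), I first treat $m=1$. Unfolding the definition of $G(T)$, the claim reduces to
\[
\mathcal{G}(T*\phi)\,\mathcal{G}(\varphi)x \;=\; \mathcal{G}(\phi)\,\mathcal{G}(T*\varphi)x,\qquad \phi\in\mathcal{D}_{0}.
\]
Two applications of (C.S.1) rewrite both sides as $\mathcal{G}\bigl((T*\phi)*_{0}\varphi\bigr)Cx$ and $\mathcal{G}\bigl(\phi*_{0}(T*\varphi)\bigr)Cx$ respectively, and these agree by associativity and commutativity of convolution on $\mathcal{D}_{0}$. For the general case, induct on $m$: the inductive hypothesis gives $(\mathcal{G}(\varphi)x,\mathcal{G}(T^{*m}*\varphi)x)\in G(T)^{m}$, and the base case applied to $\tilde{\varphi}:=T^{*m}*\varphi \in \mathcal{D}_{0}$ supplies $(\mathcal{G}(T^{*m}*\varphi)x,\mathcal{G}(T^{*(m+1)}*\varphi)x)\in G(T)$, completing the step.

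For (ii), let $(x,z)\in G(S)G(T)$, so some $y\in G(T)x \cap D(G(S))$ satisfies $z\in G(S)y$. Since $S*\varphi \in \mathcal{D}_{0}$ for every $\varphi\in\mathcal{D}_{0}$, I chain the two relations:
\[
\mathcal{G}\bigl((S*T)*\varphi\bigr)x \;=\; \mathcal{G}\bigl(T*(S*\varphi)\bigr)x \;=\; \mathcal{G}(S*\varphi)y \;=\; \mathcal{G}(\varphi)z,
\]
yielding $(x,z)\in G(S*T)$ and, in particular, $D(G(S)G(T)) \subseteq D(G(S*T))\cap D(G(T))$. For the reverse inclusion, read the same chain backwards: if $x\in D(G(S*T)) \cap D(G(T))$, pick any $y\in G(T)x$ and any $z\in G(S*T)x$; the identities above then force $\mathcal{G}(S*\varphi)y = \mathcal{G}(\varphi)z$ for every $\varphi\in\mathcal{D}_{0}$, hence $(y,z)\in G(S)$ and $x\in D(G(S)G(T))$. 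The inclusion $G(S)+G(T)\subseteq G(S+T)$ is immediate from linearity of $\mathcal{G}(\varphi)$ and $(S+T)*\varphi = S*\varphi + T*\varphi$.

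For (iii), the key computation is the integration-by-parts identity
\[
(-\varphi') *_{0} \psi(t) \;=\; \psi(t)\varphi(0) - \psi(0)\varphi(t) - (\varphi *_{0} \psi')(t),
\]
whose boundary term $\psi(t)\varphi(0)$ vanishes because $\varphi\in\mathcal{D}_{0}$. Applying $\mathcal{G}(\cdot)Cx$ to the resulting equality and using (C.S.1) to identify $\mathcal{G}\bigl((-\varphi')*_{0}\psi\bigr)C$ with $\mathcal{G}(-\varphi')\mathcal{G}(\psi)$ and $\mathcal{G}\bigl(\varphi*_{0}(-\psi')\bigr)C$ with $\mathcal{G}(\varphi)\mathcal{G}(-\psi')$ produces
\[
\mathcal{G}\bigl((-\delta')*\varphi\bigr)\,\mathcal{G}(\psi)x \;=\; \mathcal{G}(\varphi)\bigl[\mathcal{G}(-\psi')x - \psi(0)Cx\bigr],\qquad \varphi\in\mathcal{D}_{0},
\]
which is exactly the defining condition for membership in $G(-\delta')=\mathcal{A}$. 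Part (iv) is an immediate corollary: (iii) (equivalently, the inclusion $\mathcal{R}(\mathcal{G})\subseteq D(G(T))$ already recorded above the proposition with $T=-\delta'$) shows $\mathcal{R}(\mathcal{G}) \subseteq D(\mathcal{A})$, and density of $\mathcal{R}(\mathcal{G})$ transfers. The only mildly delicate point in the whole argument is the boundary cancellation in (iii), which rests precisely on the vanishing of $\varphi\in \mathcal{D}_{0}$ at the origin together with all its derivatives.
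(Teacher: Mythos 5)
Your argument is correct, and it is exactly the standard verification that the paper itself defers to (the proposition is stated without proof, with a pointer to the Banach-space analogues in \cite{knjigah}): unwind the definition of $G(T)$ against a test function in $\mathcal{D}_{0}$, reduce everything to (C.S.1) via associativity/commutativity of convolution of distributions supported in $[0,\infty)$, and in (iii) use the integration-by-parts identity whose boundary term dies because $\varphi(0)=0$ for $\varphi\in\mathcal{D}_{0}$. All the delicate points (the two-sided domain identity in (ii), the multivalued composition in the induction of (i), and $\mathcal{R}(\mathcal{G})\subseteq D(G(-\delta'))$ for (iv)) are handled correctly.
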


The assertions (ii)-(vi) of \cite[Proposition 3.1.2]{knjigah} can be reformulated for pre-(C-DS)'s in locally convex spaces; here it is only worth noting that the reflexivity of state space $E$ implies that the spaces $E^*$ and $E^{**}=E$ are both barreled and sequentially complete:

\begin{prop}\label{kuki}
Let $\mathcal{G}$ be a pre-(C-DS). 
 Then the following holds:
\begin{itemize}
\item[(i)] $C(\overline{\langle\mathcal{R}(\mathcal{G})\rangle})\subseteq\overline{\mathcal{R}(\mathcal{G})}$,
where $\langle\mathcal{R}(\mathcal{G})\rangle$
denotes the linear span of $\mathcal{R}(\mathcal{G})$.
\item[(ii)] Assume $\mathcal{G}$ is not dense and
$\overline{C\mathcal{R}(\mathcal{G})}=\overline{\mathcal{R}(\mathcal{G})}$.
Put $R:=\overline{\mathcal{R}(\mathcal{G})}$ and $H:=\mathcal{G}_{|R}$.
Then $H$ is a dense pre-($C_1$-DS) on $R$ with $C_1=C_{|R}$.
\item[(iii)] The dual $\mathcal{G}(\cdot)^*$ is a pre-($C^*$-DS) on $E^*$
and $\mathcal{N}(\mathcal{G}^*)=\overline{\mathcal{R}(\mathcal{G})}^{\circ}$.
\item[(iv)] If $E$ is reflexive,
then $\mathcal{N}(\mathcal{G})=\overline{\mathcal{R}(\mathcal{G}^*)}^{\circ}$.
\item[(v)]
The $\mathcal{G}^*$ is a ($C^*$-DS) in $E^*$ iff $\mathcal{G}$ is a dense pre-(C-DS).
If $E$ is reflexive, then $\mathcal{G}^*$ is a dense pre-($C^*$-DS)  in $E^*$ iff $\mathcal{G}$ is a (C-DS).
\end{itemize}
\end{prop}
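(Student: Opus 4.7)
The engine of the whole argument is the following consequence of (C.S.1): if $(\rho_n)$ is a regularizing sequence in $\mathcal{D}_0$, then for every $\varphi\in\mathcal{D}$ and $x\in E$ the identity $\mathcal{G}(\rho_n*_0\varphi)Cx=\mathcal{G}(\rho_n)\mathcal{G}(\varphi)x$, combined with the convergence $\rho_n*_0\varphi\to\varphi$ in $\mathcal{D}$, forces $\mathcal{G}(\rho_n)\mathcal{G}(\varphi)x\to C\mathcal{G}(\varphi)x$. By linearity of $\mathcal{G}(\rho_n)$ this extends to $\mathcal{G}(\rho_n)y\to Cy$ for every $y\in\langle\mathcal{R}(\mathcal{G})\rangle$; since $\mathcal{G}(\rho_n)y\in\mathcal{R}(\mathcal{G})$ we obtain $Cy\in\overline{\mathcal{R}(\mathcal{G})}$, and continuity of $C$ propagates this to the closure, which is (i).

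For (ii), invariance of $R$ under $C$ is exactly the hypothesis together with (i), so $C_1\in L(R)$. Invariance of $R$ under each $\mathcal{G}(\varphi)$ follows by applying (C.S.1) to vectors of the form $\mathcal{G}(\psi)x$ and, by continuity of $\mathcal{G}(\varphi)$, extending to all of $R$; property (C.S.1) for $H$ with regularizer $C_1$ is then immediate. For density, apply the engine above to $y=\mathcal{G}(\psi)x$ to see that $\mathcal{G}(\rho_n)y=H(\rho_n)y\in\mathcal{R}(H)$ converges to $Cy$, so $C\mathcal{R}(\mathcal{G})\subseteq\overline{\mathcal{R}(H)}$; combining with $\mathcal{R}(H)\subseteq R$ and the hypothesis $\overline{C\mathcal{R}(\mathcal{G})}=R$ yields $\overline{\mathcal{R}(H)}=R$.

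For (iii), the commutation $C^*\mathcal{G}(\varphi)^*=\mathcal{G}(\varphi)^*C^*$ is the adjoint of $C\mathcal{G}=\mathcal{G}C$. Commutativity of $*_0$ allows one to rewrite (C.S.1) as $\mathcal{G}(\varphi*_0\psi)C=\mathcal{G}(\psi)\mathcal{G}(\varphi)$, whose adjoint is the pre-$(C^*$-DS) identity for $\mathcal{G}^*$. The kernel identification reduces to the observation that, for a subspace $M\subseteq E$, the polar $M^{\circ}$ coincides with the annihilator $M^{\perp}$, so $\mathcal{N}(\mathcal{G}^*)=\bigcap_{\varphi\in\mathcal{D}_0}R(\mathcal{G}(\varphi))^{\circ}=\mathcal{R}(\mathcal{G})^{\circ}=\overline{\mathcal{R}(\mathcal{G})}^{\circ}$. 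Parts (iv) and (v) then follow from (iii) via the bipolar theorem: under reflexivity, $E^{**}=E$ is barreled and sequentially complete, so (iii) applied to the pre-$(C^*$-DS) $\mathcal{G}^*$ delivers (iv); and $\mathcal{G}^*$ is a $(C^*$-DS) iff $\mathcal{N}(\mathcal{G}^*)=\{0\}$ iff $\overline{\mathcal{R}(\mathcal{G})}=E$, while under reflexivity $\mathcal{G}^*$ is dense iff $\overline{\mathcal{R}(\mathcal{G}^*)}=E^*$ iff $\mathcal{N}(\mathcal{G})=\{0\}$.

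The main obstacle will be making the limit $\mathcal{G}(\rho_n*_0\varphi)Cx\to\mathcal{G}(\varphi)Cx$ precise in the locally convex setting: one must check that $\{\rho_n*_0\varphi:n\in\mathbb{N}\}$ is bounded in $\mathcal{D}$ and converges to $\varphi$ there, and then invoke the bounded equicontinuity of $\mathcal{G}$ noted at the start of Section 4, together with sequential completeness of $E$, to pass to the limit. Once that engine is in place, the remaining work is bookkeeping with (C.S.1), the continuity of $C$ and $\mathcal{G}(\varphi)$, and the Hahn--Banach/bipolar theorem for locally convex spaces.
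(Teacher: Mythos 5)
Your argument is correct and follows essentially the route the paper itself takes: the paper offers no proof of Proposition \ref{kuki}, merely citing the Banach-space antecedent \cite[Proposition 3.1.2]{knjigah}, and the regularizing-sequence ``engine'' together with the adjoint/polar bookkeeping you describe is exactly that standard argument. One precision is needed, though: the convergence $\rho_n *_0 \varphi \to \varphi$ in $\mathcal{D}$ holds only for $\varphi \in \mathcal{D}_0$ (for general $\varphi \in \mathcal{D}$ one has $\rho_n *_0 \varphi = \rho_n * (\varphi\mathbf{1}_{[0,\infty)}) \to \varphi\mathbf{1}_{[0,\infty)}$, whose derivatives blow up near $0$ when $\varphi(0)\neq 0$, so there is no convergence in $\mathcal{D}$), but since $\mathcal{R}(\mathcal{G})$ is by definition the union of the ranges $R(\mathcal{G}(\varphi))$ over $\varphi\in\mathcal{D}_0$, this restricted form of the engine is all your proof ever uses.
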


The following proposition has been recently proved in \cite{C-ultra} in the case that the operator $C$ is injective (cf. \cite[Proposition 2]{ki90} for a pioneering result in
this direction). The  argumentation contained in \cite{C-ultra} shows that the injectivity of $C$ is superfluous:

\begin{prop}\label{kisinski}
Suppose that ${\mathcal G}\in {\mathcal D}^{\prime}_{0}(L(E))$ and ${\mathcal G}(\varphi)C=C{\mathcal G}(\varphi),$ $\varphi \in {\mathcal D}$.
Then ${\mathcal G}$ is a pre-(C-DS)  iff
\begin{align*}
{\mathcal G}\bigl(\varphi^{\prime}\bigr){\mathcal G}(\psi)-{\mathcal G}(\varphi){\mathcal G}\bigl(\psi^{\prime}\bigr)=\psi(0){\mathcal G}(\varphi)C-\varphi(0){\mathcal G}
(\psi)C,\quad \varphi,\ \psi \in {\mathcal D}.
\end{align*}
\end{prop}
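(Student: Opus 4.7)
The plan is to prove both implications separately; the forward direction is a short calculation, while the converse carries the real content.

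\textbf{Necessity.} For $\varphi, \psi \in \mathcal{D}$, computing $(d/dt)(\varphi *_0 \psi)(t)$ in two ways (either differentiating through the upper limit, or first swapping the integrand via $u = t - s$) yields the pointwise identities $\varphi(0)\psi + \varphi' *_0 \psi = \psi(0)\varphi + \varphi *_0 \psi'$ on $[0,\infty)$, whence
$$\varphi' *_0 \psi - \varphi *_0 \psi' = \psi(0)\varphi - \varphi(0)\psi \quad \text{on } [0,\infty).$$
Since a pre-$(C$-DS$)$ depends only on the restrictions of its arguments to $[0,\infty)$ (as recorded in the excerpt just before Proposition \ref{isto}), I apply $\mathcal{G}(\cdot)C$ term by term and use (C.S.1) on the two truncated convolutions to land the claimed differential identity.

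\textbf{Sufficiency.} Define the bilinear form $S : \mathcal{D} \times \mathcal{D} \to L(E)$ by
$$S(\varphi, \psi) := \mathcal{G}(\varphi)\mathcal{G}(\psi) - \mathcal{G}(\varphi *_0 \psi)C;$$
the goal is $S \equiv 0$, which is precisely (C.S.1). Applying $\mathcal{G}(\cdot)C$ to the pointwise identity displayed above (a step that uses only that $\mathcal{G}$ is an $L(E)$-valued distribution, not any semigroup property) gives
$$\mathcal{G}(\varphi' *_0 \psi)C - \mathcal{G}(\varphi *_0 \psi')C = \psi(0)\mathcal{G}(\varphi)C - \varphi(0)\mathcal{G}(\psi)C.$$
Subtracting this from the hypothesis produces the key relation
$$S(\varphi', \psi) = S(\varphi, \psi'), \qquad \varphi, \psi \in \mathcal{D}.$$

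Since $E$ is barreled, the bounded equicontinuity property observed at the start of Section~4 allows the separately continuous bilinear form $S$ to be extended to an $L(E)$-valued distribution $\tilde S$ on $\mathbb{R}^2$. Under this extension, the key relation becomes $(\partial_t - \partial_s)\tilde S = 0$, so (after the change of variables $u = t+s$, $v = t-s$) $\tilde S$ depends only on $t+s$, i.e.\ $\tilde S = h \otimes 1$ for some $h \in \mathcal{D}'(\mathbb{R}, L(E))$. On the other hand, if $\mathrm{supp}\,\varphi \subseteq (-\infty, 0)$ then $\mathcal{G}(\varphi) = 0$ and $(\varphi *_0 \psi)(t) = 0$ for $t \geq 0$, whence $S(\varphi, \psi) = 0$; the symmetric statement in $\psi$ also holds. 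Thus $\mathrm{supp}\,\tilde S \subseteq [0,\infty)^2$. But any such distribution of the sole variable $t+s$ must vanish: for each prescribed $u \in \mathbb{R}$ the point $(u+1, -1)$ lies outside $[0,\infty)^2$ while satisfying $t+s=u$, so $\mathrm{supp}\,h = \emptyset$. Hence $\tilde S = 0$ and (C.S.1) follows.

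The step I expect to be the subtlest is the distributional extension of $S$ to $\tilde S$, which is exactly where the barreledness of $E$ and the boundedly equicontinuous character of $\mathcal{G}$ are used. The non-injectivity of $C$, by contrast, is not a genuine obstacle: throughout the argument $C$ appears only as a right factor in the definition of $S$, the commutativity $\mathcal{G}C = C\mathcal{G}$ is used only to reposition $C$ (never to invert or cancel it), and the passage to the distributional limit is unaffected. Thus the argument from \cite{C-ultra} transports verbatim, confirming the authors' remark that injectivity is superfluous.
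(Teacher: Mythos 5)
Your overall strategy is sound, and it is worth noting at the outset that the paper itself offers no proof of Proposition \ref{kisinski}: it simply asserts that the argument of \cite{C-ultra} (which follows Kisy\'nski's original proof of \cite[Proposition 2]{ki90}) goes through without the injectivity of $C$. So the comparison here is with that standard argument rather than with a proof printed in the paper. Your necessity direction is correct, and in fact can be stated more strongly: both $\varphi(0)\psi+\varphi'\ast_{0}\psi$ and $\psi(0)\varphi+\varphi\ast_{0}\psi'$ equal $(\varphi\ast_{0}\psi)'$ pointwise on \emph{all} of ${\mathbb R}$, not merely on $[0,\infty)$, so no appeal to the ``restriction to $[0,\infty)$'' property is needed. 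This matters for your sufficiency direction, where you invoke that restriction property before ${\mathcal G}$ is known to be a pre-(C-DS); since the identity is global, the slight circularity disappears and the derivation of the key relation $S(\varphi',\psi)=S(\varphi,\psi')$ is legitimate. The reduction of (C.S.1) to the transport equation $(\partial_{t}-\partial_{s})\tilde S=0$ together with the support constraint is exactly the mechanism underlying the Kisy\'nski-type proof, and your observation that $C$ only ever appears as a right factor (never inverted or cancelled) correctly identifies why injectivity is irrelevant.

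Two technical points deserve more care than you give them. First, the extension of the bilinear map $S$ to an $L(E)$-valued distribution $\tilde S$ on ${\mathbb R}^{2}$ is a vector-valued kernel-theorem statement; for $E$ a general barreled SCLCS this requires an argument (bounded equicontinuity of ${\mathcal G}$ on each ${\mathcal D}_{K}$ gives joint continuity of $S$ on ${\mathcal D}_{K}\times{\mathcal D}_{K}$ in suitable $C^{N}$-norms, after which one extends by density of ${\mathcal D}_{K}\otimes{\mathcal D}_{K'}$ in ${\mathcal D}_{K\times K'}$), and you assert it rather than supply it. It can be avoided entirely by integrating along the characteristics by hand: for fixed $\varphi,\psi$ set $u(r):=S(\varphi(\cdot-r),\psi(\cdot+r))x$; the key relation gives $u'\equiv 0$, while $u(r)=0$ for $r\ll 0$ because supp$(\varphi(\cdot-r))\subseteq(-\infty,0)$ forces both terms of $S$ to vanish; hence $S(\varphi,\psi)x=u(0)=0$. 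Second, in establishing supp$(\tilde S)\subseteq[0,\infty)^{2}$ (and in the shortcut just described) you need ${\mathcal G}(\varphi\ast_{0}\psi)=0$ when $\varphi\ast_{0}\psi$ vanishes on $[0,\infty)$; the support of $\varphi\ast_{0}\psi$ may touch $0$, so this is not a disjoint-support statement but rather the fact that a distribution supported in $[0,\infty)$ annihilates test functions vanishing to infinite order there. This is standard for scalar distributions (which are locally of finite order) and transfers to ${\mathcal G}$ by composing with $x\in E$ and $x^{\ast}\in E^{\ast}$, but it should be said. With these two repairs the proof is complete and is essentially the same argument the authors have in mind.
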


In \cite{C-ultra}, we have recently proved that
every (C-DS) in locally convex space is uniquely determined by its generator.
Contrary to the single-valued case, different pre-(C-DS)'s
can have the same generator. To see this, we can employ \cite[Example 2.3]{ku112}: Let $C=I,$ $E$ is a Banach space and $T\in L(E)$
is nilpotent of order $n\geq 2.$ Then the pre-(C-DS)'s ${\mathcal G}_{1}(\cdot)\equiv \sum_{i=0}^{n-2}\cdot^{(i)}(0)T^{i+1}$
and ${\mathcal G}_{2}(\cdot)\equiv 0$ have the same generator ${\mathcal A}\equiv E\times E.$

In Theorem \ref{lokal-int-C} and Theorem \ref{lokal-int-C-prim}, we clarify connections between degenerate $C$-distribution semigroups and degenerate local integrated $C$-semigroups. For the proof of first theorem, we need some preliminaries from our previous research study of distribution cosine functions (see e.g. \cite[Section 3.4]{knjigah}):
Let $\eta\in\mathcal{D}_{[-2,-1]}$ be a fixed test function satisfying $\int_{-\infty}^{\infty}\eta (t)\,dt=1$.
Then, for every fixed $\varphi\in\mathcal{D}$, we define $I(\varphi)$ as follows
$$
I(\varphi)(x):=\int\limits_{-\infty}^x
\Biggl[\varphi(t)-\eta(t)\int\limits_{-\infty}^{\infty}\varphi(u)\,du\Biggr]\,dt,
\;\;x\in\mathbb{R}.
$$
It can be simply verified that, for every $\varphi\in\mathcal{D}$ and $n\in {\mathbb N},$ we have $I(\varphi)\in\mathcal{D}$, $I^{n}(\varphi^{(n)})=\varphi ,$
$\frac{d}{dx}I(\varphi)(x)=\varphi(x)-\eta(x)\int_{-\infty}^{\infty}\varphi(u)\,du$, $x\in\mathbb{R}$ as well as that, for every $\varphi\in\mathcal{D}_{[a,b]}$  ($-\infty<a<b<\infty$), we have:
$
\text{supp}( I(\varphi))\subseteq[\min(-2,a),\max(-1,b)].
$
This simply implies that, for every $\tau>2,$ $-1<b<\tau$  and for every $m,\ n\in {\mathbb N}$ with $m\leq n,$ we have:
\begin{align}\label{jednazba-zaat}
I^{n}\bigl(\mathcal{D}_{(-\tau,b]}\bigr)\subseteq \mathcal{D}_{(-\tau,b]}\mbox{ and }
\frac{d^{m}}{dx^{m}}I^{n}(\varphi)(x)=I^{m-n}\varphi(x),\quad \varphi \in {\mathcal D},\ x\geq 0,
\end{align}
where $I^{0}\varphi:=\varphi,$ $\varphi \in {\mathcal D}.$

Now we are ready to show the following extension of \cite[Proposition 4.3 a)]{ku112} ($E$ is a Banach space, $C=I$), given here with a different proof.

\begin{thm}\label{lokal-int-C}
Let $\mathcal{G}$ be a pre-(C-DS) generated by ${\mathcal A}$, and let $\mathcal{G}$ be of finite order.
Then, for every $\tau>0$, there exist a number $n_{\tau}\in\mathbb{N}$
and a local $n_{\tau}$-times integrated $C$-semigroup $(S_{n_{\tau}}(t))_{t\in [0,\tau)}$  such that
\begin{align}\label{utf-88}
{\mathcal G}(\varphi)x=(-1)^{n_{\tau}}\int \limits^{\infty}_{0}\varphi^{(n_{\tau})}(s)S_{n_{\tau}}(s)x\, dt,\quad \varphi \in {\mathcal D}_{(-\tau ,\tau)},\ x\in E.
\end{align}
Furthermore, $(S_{n_{\tau}}(t))_{t\in [0,\tau)}$ is an $n_{\tau}$-times integrated $C$-existence family with a subgenerator ${\mathcal A},$ and the admissibility of space $L(\mathcal{N}(\mathcal{G}))$ implies that
$S_{n_{\tau}}(t)x= 0$, $t\in [0,\tau)$ for some $x\in {\mathcal N}({\mathcal G}) $ iff $T_{i}x=0$ for
$0\leq i \leq n_{\tau}-1;$ see \emph{Theorem \ref{delta-point}(i)} with $m\geq n_{\tau}-1$.
\end{thm}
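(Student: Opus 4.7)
The plan is to extract $(S_{n_\tau}(t))_{t\in[0,\tau)}$ as a normalized $n_\tau$-fold antiderivative of $\mathcal{G}$ on $(-\tau,\tau)$, and then derive its properties from those of $\mathcal{G}$ using Lemma~\ref{polinomi}, Proposition~\ref{kisinski} and Proposition~\ref{isto}(iii). To begin: since $\mathcal{G}$ is of finite order on $(-\tau,\tau)$, Schwartz's structure theorem (valid for operator-valued distributions in this setting because $E$ is barreled, so $\mathcal{G}$ is boundedly equicontinuous) furnishes $n_\tau\in\mathbb{N}$ and a strongly continuous $f:[-\tau,\tau]\to L(E)$ with $\mathcal{G}(\varphi)=(-1)^{n_\tau}\int_{-\tau}^{\tau}\varphi^{(n_\tau)}(s)f(s)\,ds$ for $\varphi\in\mathcal{D}_{(-\tau,\tau)}$; enlarging $n_\tau$ if necessary, we may assume $n_\tau\geq m+1$ where $m$ is the integer of Theorem~\ref{delta-point}(i). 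Because $\mathrm{supp}(\mathcal{G})\subseteq[0,\infty)$, Lemma~\ref{polinomi} applied pointwise to $f(\cdot)x$ on $(-\tau,0)$ forces $f(t)=\sum_{j=0}^{n_\tau-1}t^jA_j$ there for some $A_j\in L(E)$; subtracting this polynomial (whose $n_\tau$-th derivative vanishes) produces the desired strongly continuous family $S_{n_\tau}:[0,\tau)\to L(E)$ satisfying \eqref{utf-88}, with $S_{n_\tau}(t)=0$ for $t\leq 0$ after extension.

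From $C\mathcal{G}=\mathcal{G}C$ and \eqref{utf-88}, one obtains $\int_{-\tau}^{\tau}\varphi^{(n_\tau)}(s)[CS_{n_\tau}(s)-S_{n_\tau}(s)C]x\,ds=0$ for all $\varphi\in\mathcal{D}_{(-\tau,\tau)}$ and $x\in E$; a further application of Lemma~\ref{polinomi}, combined with the vanishing on $(-\tau,0)$, forces $S_{n_\tau}(t)C=CS_{n_\tau}(t)$ on $[0,\tau)$. For the semigroup law, I would substitute \eqref{utf-88} into the characterization of Proposition~\ref{kisinski}, integrate by parts in each of $t$ and $s$, and vary $\varphi,\psi$ independently; two further invocations of Lemma~\ref{polinomi} (once in each variable) extract the pointwise operator identity
\[
S_{n_\tau}(t)S_{n_\tau}(s)x=\Biggl[\int_0^{t+s}-\int_0^t-\int_0^s\Biggr]g_{n_\tau}(t+s-r)S_{n_\tau}(r)Cx\,dr
\]
valid for $t,s\in[0,\tau)$ with $t+s\in[0,\tau)$ and every $x\in E$, showing that $(S_{n_\tau}(t))_{t\in[0,\tau)}$ is a local $n_\tau$-times integrated $C$-semigroup.

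For property (B), Proposition~\ref{isto}(iii) gives $(\mathcal{G}(\psi)x,\mathcal{G}(-\psi')x-\psi(0)Cx)\in\mathcal{A}$ for every $\psi\in\mathcal{D}$. Choosing $\psi_n$ whose $n_\tau$-th derivative mollifies $(-1)^{n_\tau}\mathbf{1}_{[0,t]}$ with uniformly bounded support---the role played by the iterated antiderivative operator $I^{n_\tau}$ of \eqref{jednazba-zaat}---one has $\mathcal{G}(\psi_n)x\to\int_0^t S_{n_\tau}(s)x\,ds$ and $\mathcal{G}(-\psi_n')x-\psi_n(0)Cx\to S_{n_\tau}(t)x-g_{n_\tau+1}(t)Cx$ in $E$, so closedness of $\mathcal{A}$ yields (B). For the admissibility clause, if $x\in\mathcal{N}(\mathcal{G})$ then Theorem~\ref{delta-point}(i) gives $\mathcal{G}(\varphi)x=\sum_{j=0}^{m}(-1)^j\varphi^{(j)}(0)T_jx$ for every $\varphi\in\mathcal{D}$; matching this with \eqref{utf-88} via $n_\tau$-fold integration by parts and solving for the polynomial $S_{n_\tau}(\cdot)x$ of degree $<n_\tau$ yields
\[
S_{n_\tau}(t)x=\sum_{j=0}^{n_\tau-1}g_{n_\tau-j}(t)T_jx,\qquad t\in[0,\tau),
\]
with the convention $T_j=0$ for $j>m$. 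Linear independence of the polynomials $\{g_{n_\tau-j}\}_{j=0}^{n_\tau-1}$ then gives the equivalence of $S_{n_\tau}(\cdot)x\equiv 0$ on $[0,\tau)$ with $T_ix=0$ for $0\leq i\leq n_\tau-1$.

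The principal obstacle is the semigroup-law step: extracting a clean pointwise two-variable operator identity from the relation of Proposition~\ref{kisinski} requires a symmetric and iterated use of Lemma~\ref{polinomi}, while carefully tracking the convolution $\varphi*_0\psi$ and respecting the local support constraint $t+s<\tau$ when the test functions are varied. A secondary technical issue is that the approximants $\psi_n$ in the existence-family step must have uniformly bounded supports so that both $\mathcal{G}(\psi_n)$ and $\mathcal{G}(-\psi_n')$ remain within the regime where the representation \eqref{utf-88} controls them; this is exactly what the operator $I$ introduced before the theorem was designed to accomplish.
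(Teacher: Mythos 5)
Your proposal is essentially correct and shares the paper's overall architecture: both start from the finite-order representation to produce $S_{n_{\tau}}$, both obtain condition (B) from Proposition \ref{isto}(iii) by testing against iterated antiderivatives of a mollifier translated to $t$ (your $\psi_{n}$ play exactly the role of the paper's $I^{n_{\tau}+1}(\rho_{n}^{t})$, whose limit at $0$ the paper computes explicitly to be $(-1)^{n_{\tau}+1}g_{n_{\tau}+1}(t)$), and both reduce the admissibility clause to the polynomial form of $S_{n_{\tau}}(\cdot)x$ on $\mathcal{N}(\mathcal{G})$ via Theorem \ref{delta-point}. Two of your sub-steps genuinely differ. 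First, you prove $CS_{n_{\tau}}(t)=S_{n_{\tau}}(t)C$ by Lemma \ref{polinomi} plus the vanishing of the kernel on $(-\tau,0]$, where the paper again uses mollifier limits; your version is cleaner and equally valid. Second, and more significantly, for the semigroup law the paper does \emph{not} go through Proposition \ref{kisinski}: it uses (C.S.1) directly, expands $(\varphi*_{0}\psi)^{(n_{\tau})}$ by the Leibniz-type identity \eqref{astur-gilberto}, substitutes $\varphi=I^{n_{\tau}}(\rho_{n}^{t})$ and $\psi=I^{n_{\tau}}(\rho_{m}^{s})$, and passes to the limit first in $n$ and then in $m$, finishing with the convolution identity $\sum_{j}g_{n_{\tau}-j}(t)g_{j+1}(s-u)=g_{n_{\tau}}(t+s-u)$. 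This buys a direct one-variable-at-a-time limit argument and avoids precisely the two-variable extraction problem you flag as your principal obstacle: Lemma \ref{polinomi} is a one-variable statement, and "invoking it once in each variable" on the Kisy\'nski relation still requires freezing the other variable with a mollifier, at which point you have effectively reconstructed the paper's computation. Your route is known to work in the classical non-degenerate setting, but as written it is the one step left at the level of a plan rather than a proof; I would either carry it out in full or switch to the (C.S.1)-based argument.

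Two minor points. Your closed form $S_{n_{\tau}}(t)x=\sum_{j=0}^{n_{\tau}-1}g_{n_{\tau}-j}(t)T_{j}x$ on $\mathcal{N}(\mathcal{G})$ is missing a sign $(-1)^{j+n_{\tau}+1}$ coming from the $n_{\tau}$-fold integration by parts, but this does not affect the equivalence with $T_{i}x=0$, $0\leq i\leq n_{\tau}-1$, which follows from linear independence of the $g_{n_{\tau}-j}$ exactly as you say; in fact your treatment of this clause is more explicit than the paper's. Also note that you only need (B), not (A), since the paper's existence families are defined by (B) alone, so your omission of (A) is correct rather than a gap.
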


\begin{proof}
Let $\tau>2$ and $\rho \in {\mathcal D}_{[0,1]}$ with $\int \rho \, dm =1$ be fixed.
Set $\rho_{n}(\cdot):=n\rho (n\cdot),$ $n\in {\mathbb N}.$ Then, for every $t\in [0,\tau),$
the sequence $\rho_{n}^{t}(\cdot):=\rho_{n}(\cdot -t)$ converges to $\delta_{t}$ as $n\rightarrow +\infty$ (in the space of scalar-valued distributions).
Since $\mathcal{G}\in {\mathcal D}^{\prime}_{0}(L(E))$ and $\mathcal{G}$ is of finite order, we know that there exist
a number $n_{\tau}\in\mathbb{N}$
and a strongly continuous operator family $(S_{n_{\tau}}(t))_{t\in [0,\tau)}\subseteq L(E)$  such that (\ref{utf-88})
holds good. We will first prove that $(S_{n_{\tau}}(t))_{t\in [0,\tau)}$ is a local $n_{\tau}$-times integrated $C$-existence family
commuting with $C$ and having ${\mathcal A}$ as a subgenerator. In order to do that, observe that the commutation of $\mathcal{G}(\cdot)$ and $C$ yields
\begin{align*}
\int \limits^{\infty}_{0}\varphi^{(n_{\tau})}(s)CS_{n_{\tau}}(s)x\, dt=\int \limits^{\infty}_{0}\varphi^{(n_{\tau})}(s)S_{n_{\tau}}(s)Cx\, dt,\quad \varphi \in {\mathcal D}_{(-\tau ,\tau)},\ x\in E.
\end{align*}
Plugging $ \varphi=I^{n_{\tau}}(\rho_{n}^{t})$ in this expression (cf. also (\ref{jednazba-zaat})), we get that
\begin{align*}
\int \limits^{\infty}_{0}\rho_{n}^{t}(s)CS_{n_{\tau}}(s)x\, dt=\int \limits^{\infty}_{0}\rho_{n}^{t}(s)S_{n_{\tau}}(s)Cx\, dt,\quad \varphi \in {\mathcal D}_{(-\tau ,\tau)},\ x\in E,\ t\in [0,\tau).
\end{align*}
Letting $n\rightarrow +\infty$ we obtain $CS_{n_{\tau}}(t)x=S_{n_{\tau}}(t)Cx,$ $x\in E,$ $t\in [0,\tau).$
Now we will prove that the condition (B) hold with the number $\alpha$ replaced with the number $n_{\tau}$
therein. By Proposition \ref{isto}(iii), we have $(\mathcal{G}(\varphi)x,\mathcal{G}(-\varphi^{\prime})x-\varphi(0)Cx)\in {\mathcal A},$ $\varphi \in {\mathcal D},$ $x\in E.$ Applying integration by parts and multiplying with $(-1)^{n_{\tau}+1}$ after that, the above implies
\begin{align*}
\Biggl( \int^{\infty}_{0}\varphi^{(n_{\tau}+1)}(s)\int^{s}_{0}S_{n_{\tau}}(r)x\, dr \, ds ,
\int^{\infty}_{0}\varphi^{(n_{\tau}+1)}(s)S_{n_{\tau}}(s)x \, ds +(-1)^{n_{\tau}}\varphi(0)Cx
\Biggr) \in {\mathcal A},
\end{align*}
for any $\varphi \in {\mathcal D}_{(-\tau ,\tau)}$ and $
x\in E.$ Plugging $ \varphi=I^{n_{\tau}+1}(\rho_{n}^{t})$ in this expression, we get that
\begin{align}\label{cor-tutf}
\Biggl( \int^{\infty}_{0}\rho_{n}^{t}(s)\int^{s}_{0}S_{n_{\tau}}(r)x\, dr \, ds ,
\int^{\infty}_{0}\rho_{n}^{t}(s)S_{n_{\tau}}(s)x \, ds +(-1)^{n_{\tau}}I^{n_{\tau}+1}(\rho_{n}^{t})(0)Cx
\Biggr) \in {\mathcal A},
\end{align}
for any $t\in [0,\tau)$ and $
x\in E.$ Let us prove that
\begin{align}\label{cor-tutf-prim}
\lim_{n\rightarrow +\infty}I^{n_{\tau}+1}(\rho_{n}^{t})(x)=(-1)^{n_{\tau}+1}g_{n_{\tau}+1}(t-x),\quad t\in [0,\tau),\ 0\leq x\leq t.
\end{align}
Let $t\in [0,\tau)$ and $x\in [0,t]$ be fixed. Then
a straightforward integral computation shows that
\begin{align*}
I^{n_{\tau}+1}(\varphi)(x)=(-1)^{n_{\tau}+1}\int^{\infty}_{x}\int^{\infty}_{x_{n_{\tau}}}\int^{\infty}_{x_{n_{\tau}-1}}\cdot \cdot \cdot \int^{\infty}_{x_{2}}\varphi(x_{1})\, dx_{1}\, dx_{2}\cdot \cdot \cdot \, dx_{n_{\tau}+1}
\end{align*}
for any $\varphi \in {\mathcal D}.$ For $ \varphi=I^{n_{\tau}+1}(\rho_{n}^{t}),$ we have
\begin{align*}
I^{n_{\tau}+1}\bigl(\rho_{n}^{t}\bigr)(0)&= (-1)^{n_{\tau}+1}\int^{t+(1/n)}_{x} \int^{t+(1/n)}_{x_{n_{\tau}}}\int^{t+(1/n)}_{x_{n_{\tau}-1}}\cdot \cdot \cdot \int^{t+(1/n)}_{x_{2}}
\\ & \times \rho_{n}^{t}(x_{1})\, dx_{1}\, dx_{2}\cdot \cdot \cdot \, dx_{n_{\tau}+1}
\\ & =(-1)^{n_{\tau}+1}\int^{t+(1/n)}_{x} \int^{t+(1/n)}_{x_{n_{\tau}}}\int^{t+(1/n)}_{x_{n_{\tau}-1}}\cdot \cdot \cdot \int^{t+(1/n)}_{x_{3}}
\\ & \times \Biggl[ 1-\int^{nx_{2}-nt}_{0}\rho(x_{1}) \, dx_{1}\Biggr]\, dx_{2}\cdot \cdot \cdot \, dx_{n_{\tau}+1}
\\ &=(-1)^{n_{\tau}+1}\int^{t+(1/n)}_{x} \int^{t+(1/n)}_{x_{n_{\tau}}}\int^{t+(1/n)}_{x_{n_{\tau}-1}}\cdot \cdot \cdot \int^{t+(1/n)}_{x_{3}}
\\ & \times \, dx_{2}\cdot \cdot \cdot \, dx_{n_{\tau}+1}
\\ &
-(-1)^{n_{\tau}+1}\int^{t+(1/n)}_{x} \int^{t+(1/n)}_{x_{n_{\tau}}}\int^{t+(1/n)}_{x_{n_{\tau}-1}}\cdot \cdot \cdot \int^{t+(1/n)}_{t}
\\ & \times \int^{nx_{2}-nt}_{0}\rho(x_{1}) \, dx_{1} \, dx_{2}\cdot \cdot \cdot \, dx_{n_{\tau}+1}
\\ & :=(-1)^{n_{\tau}+1}\bigl[I_{1}(t,x)-I_{2}(t,x)\bigr],\quad t\in [0,\tau).
\end{align*}
Since
\begin{align*}
\int^{t+(1/n)}_{t} \int^{nx_{2}-nt}_{0}\rho(x_{1}) \, dx_{1}
\, dx_{2}\leq 1/n,\quad t\in [0,\tau),\ n\in {\mathbb N},
\end{align*}
we have that $\lim_{n\rightarrow +\infty}I_{2}(t,x)=0,$
$t\in [0,\tau).$
Clearly,
\begin{align*}
\lim_{n\rightarrow +\infty}I_{1}(t,x)=\int^{t}_{x} \int^{t}_{x_{n_{\tau}}}\int^{t}_{x_{n_{\tau}-1}}\cdot \cdot \cdot \int^{t}_{x_{3}}\, dx_{2}\cdot \cdot \cdot \, dx_{n_{\tau}+1}=g_{n_{\tau}+1}(t-x).
\end{align*}
This gives (\ref{cor-tutf-prim}). Keeping in mind this equality and
letting $n\rightarrow +\infty$ in (\ref{cor-tutf}), we obtain (B). It remains to be proved the semigroup property of $(S_{n_{\tau}}(t))_{t\in [0,\tau)}.$ Toward this end, let us recall that
\begin{align}\label{astur-gilberto}
\bigl(\varphi*_0\psi\bigr)^{^{(n_\tau)}}(u)=\bigl(\varphi^{(n_\tau)}*_0\psi\bigr)(u)+\sum_{j=0}^{n_\tau-1}\varphi^{(j)}(0)\psi^{(n_\tau-1-j)}(u),\quad \varphi,\ \psi \in {\mathcal D},\ u\in {\mathbb R}.
\end{align}
Fix $x\in E$ and $t,\ s\in [0,\tau)$ with $t+s\in [0,\tau).$
Using (\ref{astur-gilberto}), (C.S.1) and the foregoing arguments, we get that, for every  $m,\ n\in {\mathbb N}$ sufficiently large:
\begin{align*}
&\int^{t}_{0}\int^{s}_{0}\rho_{n}^{t}(u)\rho_{m}^{s}(v)S_{n_{\tau}}(u)S_{n_{\tau}} (v)x\, du \, dv
\\ &=(-1)^{n_{\tau}}\int^{t+s}_{0} \Biggl[ \Bigl(\rho_{n}^{t}*_0 I^{n_{\tau}}(\rho_{m}^{s})\Bigr)(u)+\sum_{j=0}^{n_\tau-1}I^{n_{\tau}-j}(\rho_{n}^{t})(0)I^{j+1}(\rho_{m}^{s})(u)\Biggr]S_{n_{\tau}}(u)Cx\, du.
\end{align*}
Letting
$n\rightarrow +\infty,$ we obtain with the help of (\ref{cor-tutf-prim}) that
\begin{align*}
&\int^{s}_{0}\rho_{m}^{s}(v)S_{n_{\tau}}(t)S_{n_{\tau}} (v)x\, dv
\\ &=(-1)^{n_{\tau}}\lim_{n\rightarrow +\infty}\int^{t+s}_{0} \Biggl[ \Bigl(\rho_{n}^{t}*_0 I^{n_{\tau}}(\rho_{m}^{s})\Bigr)(u)
\\ &+\sum_{j=0}^{n_\tau-1}I^{n_{\tau}-j}(\rho_{n}^{t})(0)I^{j+1}(\rho_{m}^{s})(u)\Biggr]S_{n_{\tau}}(u)Cx\, du
\\ &=(-1)^{n_{\tau}}\int^{t}_{0}\Biggl[\sum_{j=0}^{n_{\tau-1}}(-1)^{n_{\tau}-j}g_{n_{\tau}-j}(t)I^{j+1}(\rho_{m}^{s})(u)\Biggr]S_{n_{\tau}}(u)Cx\, du
\\ & + (-1)^{n_{\tau}}\int^{t+s}_{t}\Biggl[ I^{n_{\tau}}(\rho_{m}^{s})(u-t)+\sum_{j=0}^{n_{\tau-1}}(-1)^{n_{\tau}-j}g_{n_{\tau}-j}(t)I^{j+1}(\rho_{m}^{s})(u)\Biggr]S_{n_{\tau}}(u)Cx\, du
\\ & =\sum_{j=0}^{n_{\tau-1}}(-1)^{j}g_{n_{\tau}-j}(t)\int^{s}_{0} I^{j+1}(\rho_{m}^{s})(u)S_{n_{\tau}}(u)Cx\, du
\\ & +(-1)^{n_{\tau}}\int^{t+s}_{t}I^{n_{\tau}}(\rho_{m}^{s})(u-t)S_{n_{\tau}}(u)Cx\, du.
\end{align*}
The semigroup property now easily follows by letting $m\rightarrow +\infty$ in the above expresion, with the help of (\ref{cor-tutf-prim}) and the identity
\begin{align*}
\sum_{j=0}^{n_{\tau-1}}g_{n_{\tau}-j}(t)g_{j+1}(s-u)=g_{n_{\tau}}(t+s-u),\quad u>0.
\end{align*}
Let $x\in{\mathcal N}({\mathcal G})$. Then there are $x_0,x_1,...,x_{n_{\tau}-1}\in E$, such that $S_{n_{\tau}}(t)x=\sum_{i=0}^{n_{\tau}-1}\frac{t^i}{i!}x_i$, for $t\in[0,\tau)$ and $x\in E$. For $\varphi\in{\mathcal D}$, such that $\varphi=1$ on a neighborhood of zero and integrating by parts $n_{\tau}$-times we have
$$T_ix={\mathcal G}({\varphi})x=(-1)^n\int\limits_0^{\infty}{\varphi}^{(n_{\tau})}(t)S_{n_{\tau}}(t)x\, dt=\varphi(0)\Big{(}S_{n_{\tau}}(t)\Big{)}x^{(n_{\tau}-1)}\, \big{|}_{t=0}=x_{n_{\tau}-1}.$$
Now, for $x$ is not an element in $\mbox{Ker}T_i$, $i=0,1,...,n_{\tau}-1$, $m\geq n_{\tau}-1$, we have that $x$ is not an element in $\mbox{Ker}S_{n_{\tau}}(t)$. But for $x\in\mbox{Ker}T_i$, $i=0,1,2,...,n_{\tau}-1$, we have that ${\mathcal G}({\varphi})x=0$ holds for all $\varphi\in{\mathcal D}_{(-\infty,\tau]}$ and this implies that $S_{n_{\tau}}(t)x=0$, $t\in[0,\tau)$.

\end{proof}

\begin{rem}\label{xcvbnm-prim}
\begin{itemize}
\item[(i)]
We have already seen that ${\mathcal G}(\cdot)\equiv 0$ is a degenerate pre-distribution semigroup with the generator ${\mathcal A}\equiv E\times E.$ Then, for every $\tau>0$ and for every number $n_{\tau}\in\mathbb{N},$ there exists only one local $n_{\tau}$-times integrated semigroup $(S_{n_{\tau}}(t)\equiv 0)_{t\in [0,\tau)}$ so that
(\ref{utf-88}) holds. It is clear that the condition (B) holds and that condition (A) does not hold here.
Denote by ${\mathcal A}_{\tau}$ the integral generator of $(S_{n_{\tau}}(t)\equiv 0)_{t\in [0,\tau)}.$
Then ${\mathcal A}_{\tau}=\{0\} \times E$ is strictly contained in the integral generator ${\mathcal A}$
of ${\mathcal G}.$
 Furthermore, if
$C\neq 0,$ then
there do not exist
$\tau>0$ and $n_{\tau}\in\mathbb{N}$ such that ${\mathcal A}$ is the integral generator (subgenerator) of a local
$n_{\tau}$-times integrated $C$-semigroup.
\item[(ii)] A similar line of reasoning as in the final part of the proof of \cite[Theorem 3.1.9]{knjigah} shows that
for each $(x,y)\in {\mathcal A}$ there exists elements $x_{0},x_{1},\cdot \cdot \cdot,x_{n_{\tau}}$ in $E$ such that
\begin{align*}
S_{n_{\tau}}(t)x-g_{n_{\tau}+1}(t)Cx-\int^{t}_{0}S_{n_{\tau}}(s)y\, ds=\sum \limits_{j=0}^{n_{\tau}}g_{j+1}(t)x_{j},\quad t\in [0,\tau)
\end{align*}
and $x_{j}\in {\mathcal A}x_{j-1}$ for $1\leq j\leq n_{\tau}.$ In purely multivalued case, it is not clear how we can prove that
$x_{j}=0$ for $0\leq j\leq n_{\tau}$ without imposing some additional unpleasant conditions.
\item[(iii)] Using dualization, we can simply reformulate the second equality appearing on the second line after the equation \cite[(11)]{ku112} in our context.
\end{itemize}
\end{rem}

The proof of subsequent theorem can be deduced by using the argumentation contained in the proof of
\cite[Theorem 3.1.8]{knjigah}.

\begin{thm}\label{lokal-int-C-prim}
Suppose that there exists a sequence $((p_k,\tau_k))_{k\in \mathbb{N}_{0}}$ in $\mathbb{N}_{0} \times (0,\infty)$
such that $\lim_{k\rightarrow \infty}\tau_{k}=\infty ,$ $(p_k)_{k\in \mathbb{N}_{0}}$ and
$(\tau_k)_{k\in \mathbb{N}_{0}}$ are strictly increasing,
as well as that for each
$k\in \mathbb{N}_{0}$
there exists a local $p_{k}$-times integrated
$C$-semigroup $(S_{p_{k}}(t))_{t\in [0,\tau_{k})}$ on $E$
so that
\begin{align}\label{zavezi-dot}
S_{p_{m}}(t)x=\bigl(g_{p_{m}-p_{k}}\ast_{0} S_{p_{k}}(\cdot)x \bigr)(t ),\quad x\in E,\ t\in [0,\tau_{k}),
\end{align}
provided $k<m.$
Define
$$
{\mathcal G}(\varphi)x:=(-1)^{p_{k}}\int \limits^{\infty}_{0}\varphi^{(p_{k})}(t)S_{p_{k}}(t)x\, dt,\quad \varphi \in {\mathcal D}_{(-\infty ,\tau_{k})},\ x\in E,\ k\in \mathbb{N}_{0}.
$$
Then $
{\mathcal G}$ is well-defined and $
{\mathcal G}$ is a pre-(C-DS).
\end{thm}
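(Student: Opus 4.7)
The plan has three ingredients: consistency of the definition across the admissible indices $k$, the basic structural properties of $\mathcal{G}$ as an element of $\mathcal{D}_0'(L(E))$ commuting with $C$, and the multiplicative identity (C.S.1).

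For consistency, I fix $k<m$ and $\varphi\in\mathcal{D}_{(-\infty,\tau_k)}$, so that both the $k$- and $m$-formulas define $\mathcal{G}(\varphi)x$. I insert (\ref{zavezi-dot}) into the $m$-formula and apply Fubini (legal since $S_{p_k}$ is strongly continuous and $\operatorname{supp}(\varphi^{(p_m)})$ is compact) to get
\begin{align*}
(-1)^{p_m}\int_0^\infty \varphi^{(p_m)}(t)S_{p_m}(t)x\,dt=(-1)^{p_m}\int_0^\infty S_{p_k}(s)x\int_s^\infty \varphi^{(p_m)}(t)g_{p_m-p_k}(t-s)\,dt\,ds.
\end{align*}
Integrating by parts $p_m-p_k$ times in $t$ in the inner integral, and using that $g_n(0)=0$ for $n\geq 2$ to kill the boundary contributions at each intermediate step, one gets $\int_s^\infty \varphi^{(p_m)}(t)g_{p_m-p_k}(t-s)\,dt=(-1)^{p_m-p_k}\varphi^{(p_k)}(s)$. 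Substituting this back recovers the $k$-formula.

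For the basic structural properties, linearity in $x$ is trivial, while continuity of $\mathcal{G}(\varphi)\in L(E)$ follows because, for any compact $K\subset[0,\tau_k)$, the family $\{S_{p_k}(t):t\in K\}\subset L(E)$ is equicontinuous (barreledness of $E$ combined with strong continuity, via the uniform boundedness principle). Continuity in $\varphi$ is routine. If $\operatorname{supp}(\varphi)\subset(-\infty,0)$, then $\varphi^{(p_k)}$ vanishes on $[0,\infty)$, hence $\mathcal{G}(\varphi)=0$, giving $\mathcal{G}\in\mathcal{D}_0'(L(E))$. The commutation $C\mathcal{G}(\varphi)=\mathcal{G}(\varphi)C$ follows directly from $CS_{p_k}(t)=S_{p_k}(t)C$.

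The substantive step is (C.S.1). Given $\varphi,\psi\in\mathcal{D}$, I choose $k$ with $\operatorname{supp}(\varphi)\cup\operatorname{supp}(\psi)\subset(-\infty,\tau_k/2)$; such $k$ exists because $\tau_k\to\infty$, and then $\operatorname{supp}(\varphi*_0\psi)\subset(-\infty,\tau_k)$ as well. Fubini yields
\begin{align*}
\mathcal{G}(\varphi)\mathcal{G}(\psi)x=\int_0^\infty\!\int_0^\infty \varphi^{(p_k)}(t)\psi^{(p_k)}(s)\,S_{p_k}(t)S_{p_k}(s)x\,ds\,dt.
\end{align*}
I substitute the defining identity from Definition \ref{2.1.1.1'}(ii), which expresses $S_{p_k}(t)S_{p_k}(s)x$ as an algebraic combination of three integrals in $r$ involving $g_{p_k}$ and $S_{p_k}(r)Cx$, apply Fubini a second time, and integrate by parts $p_k$ times in $r$; the boundary contributions combine precisely into the sum $\sum_{j=0}^{p_k-1}\varphi^{(j)}(0)\psi^{(p_k-1-j)}(r)$ appearing in the Leibniz-type identity (\ref{astur-gilberto}) for $(\varphi*_0\psi)^{(p_k)}$. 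Everything collapses to
\begin{align*}
(-1)^{p_k}\int_0^\infty (\varphi*_0\psi)^{(p_k)}(r)\,S_{p_k}(r)Cx\,dr=\mathcal{G}(\varphi*_0\psi)Cx,
\end{align*}
which is (C.S.1). The main obstacle is the algebraic bookkeeping in this last step: tracking how the three integrals from the integrated-semigroup identity combine with the boundary terms produced by the $p_k$-fold integration by parts to reproduce the right-hand side of (\ref{astur-gilberto}). This is the scalar-valued core of the proof of \cite[Theorem 3.1.8]{knjigah}; the adaptation here amounts to re-casting the computation in the SCLCS framework (using bounded equicontinuity of $(S_{p_k}(t))$ on the compact support of $\varphi^{(p_k)}\otimes\psi^{(p_k)}$) and observing that the non-injectivity of $C$ does not enter the scalar computation at all.
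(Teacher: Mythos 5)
Your proposal is correct and follows essentially the same route as the paper, whose entire ``proof'' of this theorem is a one-line deferral to the argumentation of \cite[Theorem 3.1.8]{knjigah} --- precisely the computation you unpack (consistency of the definition via (\ref{zavezi-dot}) and repeated integration by parts against $g_{p_m-p_k}$, membership in $\mathcal{D}_0'(L(E))$ via barreledness, and (C.S.1) via the composition law of Definition \ref{2.1.1.1'}(ii) together with the Leibniz-type identity (\ref{astur-gilberto})). The only slip is cosmetic: in the (C.S.1) step the $p_k$-fold integration by parts must be carried out in the variables $t$ and $s$ (transferring the derivatives of $\varphi$ and $\psi$ onto the kernel $g_{p_k}(t+s-r)$), not in $r$, since $r\mapsto S_{p_k}(r)Cx$ is only assumed continuous.
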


\begin{rem}\label{Banach}
\begin{itemize}
\item[(i)] Denote by ${\mathcal A}_{k}$ the integral generator of $(S_{p_{k}}(t))_{t\in [0,\tau_{k})}$ ($k\in \mathbb{N}_{0}$).
Then ${\mathcal A}_{k}\subseteq {\mathcal A}_{m}$ for $k>m$ and  $\bigcap_{{k\in {\mathbb N}_{0}}}{\mathcal A}_{k}\subseteq {\mathcal A},$ where ${\mathcal A}$ is the integral generator of ${\mathcal G}.$ Even in the case that $C=I,$ $\bigcup_{{k\in {\mathbb N}_{0}}}{\mathcal A}_{k}$ can be a proper subset of ${\mathcal A}.$
\item[(ii)] Suppose that
${\mathcal A}$ is a subgenerator of $(S_{p_{k}}(t))_{t\in [0,\tau_{k})}$ for all
$k\in \mathbb{N}_{0}.$ Then (\ref{zavezi-dot}) automatically holds.
\item[(iii)] In the case that $C=I,$ then it suffices to suppose that there exists an MLO ${\mathcal A}$ such that
${\mathcal A}$ is a subgenerator of a local $p$-times integrated
semigroup $(S_{p}(t))_{t\in [0,\tau)}$ for some $p\in {\mathbb N}$ and $\tau>0$  (\cite{catania}).
\end{itemize}
\end{rem}

Let $\alpha\in(0,\infty)\setminus \mathbb{N}$, $f\in\mathcal{S}$ and $n=\lceil\alpha\rceil$. Let us recall
that the Weyl fractional derivative $W^{\alpha}_+$ of order $\alpha$
is defined by
\begin{align*}
W^{\alpha}_+f(t):=\frac{(-1)^n}{\Gamma(n-\alpha)}\frac{d^n}{dt^n}\int\limits^{\infty}_t(s-t)^{n-\alpha-1}f(s)\,ds,
\;t\in\mathbb{R}.
\end{align*}
If $\alpha=n\in\mathbb{N}_{0}$, then we set $W^n_+:=(-1)^n\frac{d^n}{dt^n}.$
It is well known that the following equality holds: $W^{\alpha+\beta}_{+}f=W^{\alpha}_{+}W^{\beta}_{+}f$, $\alpha,\,\beta>0,$ $f\in\mathcal{S}$.

Suppose now that $\alpha\in(0,\infty)\setminus \mathbb{N}$ and
${\mathcal A}$ is the integral generator of a global $\alpha$-times integrated $C$-semigroup $(S_{\alpha}(t))_{t\geq 0}$ on $E.$ Then ${\mathcal A}$ is the integral generator of a global $n$-times integrated $C$-semigroup $(S_{n}(t))_{t\geq 0}$ on $E,$ where $n=\lceil \alpha \rceil$ and $S_{n}(t)x:=(g_{n-\alpha}\ast S_{\alpha}(\cdot)x)(t),$ $x\in E,$
$t\geq 0$ (\cite{catania}). Arguing as in \cite{C-ultra},
we have that:
$$
\int^{\infty}_0W^{\alpha}_+\varphi(t)S_{\alpha}(t)x\,dt=
(-1)^{n}  \int \limits^{\infty}_{0}\varphi^{(n)}(t)S_{n}(t)x\, dt,\quad x\in E,\ \varphi \in {\mathcal D}.
$$
Keeping in mind the proof of
\cite[Theorem 3.1.8]{knjigah}, we obtain the following:

\begin{thm}\label{miana}
Assume that $\alpha \geq 0$ and ${\mathcal A}$ is the integral generator of a global $\alpha$-times integrated $C$-semigroup $(S_{\alpha}(t))_{t\geq 0}$ on $E.$
Set
\begin{align*}
\mathcal{G}_{\alpha}(\varphi)x:=\int^{\infty}_0W^{\alpha}_+\varphi(t)S_{\alpha}(t)x\,dt,\quad x\in E,\ \varphi\in\mathcal{D}.
\end{align*}
Then ${\mathcal G}$ is a pre-(C-DS) whose integral generator contains ${\mathcal A}.$
\end{thm}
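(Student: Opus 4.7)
The plan is to reduce the statement to the integer-order case and then invoke Theorem~\ref{lokal-int-C-prim}. Setting $n := \lceil \alpha \rceil$ and using the identity displayed just before the statement, I would rewrite
\[
\mathcal{G}_{\alpha}(\varphi)x = (-1)^n \int_0^\infty \varphi^{(n)}(t) S_n(t) x \, dt, \qquad \varphi \in \mathcal{D},\ x \in E,
\]
where $S_n(t)x := (g_{n-\alpha} \ast S_\alpha(\cdot)x)(t)$ is the associated global $n$-times integrated $C$-semigroup. By the paragraph preceding the statement, $\mathcal{A}$ is the integral generator of $(S_n(t))_{t \geq 0}$, which in the global setting is simultaneously a subgenerator of $(S_n(t))_{t \geq 0}$.

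To apply Theorem~\ref{lokal-int-C-prim} I would then build a family of iterated integrated $C$-semigroups: choose a strictly increasing sequence $\tau_k \uparrow \infty$, set $p_k := n + k$, and define $S_{p_k}(t)x := (g_k \ast_0 S_n(\cdot)x)(t)$ for $t \geq 0$ and $k \in \mathbb{N}_0$. A routine convolution argument (using Lemma~\ref{integracija-tricky} to transfer conditions (A) and (B) through the convolution with $g_k$) shows that each $(S_{p_k}(t))_{t \geq 0}$ is a global $p_k$-times integrated $C$-semigroup having $\mathcal{A}$ as a subgenerator, and Remark~\ref{Banach}(ii) then guarantees the coherence hypothesis \eqref{zavezi-dot}. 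Theorem~\ref{lokal-int-C-prim} accordingly yields a pre-(C-DS) $\tilde{\mathcal{G}}$ given by
\[
\tilde{\mathcal{G}}(\varphi)x = (-1)^{p_k} \int_0^\infty \varphi^{(p_k)}(t) S_{p_k}(t)x \, dt, \qquad \varphi \in \mathcal{D}_{(-\infty,\tau_k)}.
\]
Taking $k=0$ (so $p_0 = n$ and $S_{p_0} = S_n$) this matches the earlier display on $\mathcal{D}_{(-\infty,\tau_0)}$, and since the formula is independent of $k$ by \eqref{zavezi-dot} while every test function has support in some $(-\infty,\tau_k)$, I would conclude $\mathcal{G}_\alpha = \tilde{\mathcal{G}}$ as elements of $\mathcal{D}'_0(L(E))$; in particular $\mathcal{G}_\alpha$ is a pre-(C-DS).

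Finally, the required inclusion of $\mathcal{A}$ into the integral generator of $\mathcal{G}_\alpha$ follows from Remark~\ref{Banach}(i): writing $\mathcal{A}_k$ for the integral generator of $(S_{p_k}(t))_{t \geq 0}$, we have $\mathcal{A} \subseteq \mathcal{A}_k$ for every $k$ (since $\mathcal{A}$ is a subgenerator of $(S_{p_k})$ and condition~(A) then reads as the defining relation for $\mathcal{A}_k$), hence $\mathcal{A} \subseteq \bigcap_{k \in \mathbb{N}_0} \mathcal{A}_k \subseteq \mathcal{A}_{\tilde{\mathcal{G}}}$. The delicate point in this plan is to transfer conditions~(A) and~(B) through the convolutions with the kernels $g_{n-\alpha}$ and $g_k$ without assuming injectivity of $C$; this forces one to invoke Lemma~\ref{integracija-tricky} to push the integral inside the multivalued $\mathcal{A}$, but the computation otherwise parallels the proof of the analogous non-degenerate Banach-space result cited at the end of the paragraph preceding the statement.
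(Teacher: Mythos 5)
Your proposal is correct and takes essentially the same route as the paper: the paper's own argument consists of the reduction $\mathcal{G}_{\alpha}(\varphi)x=(-1)^{n}\int_{0}^{\infty}\varphi^{(n)}(t)S_{n}(t)x\,dt$ with $n=\lceil\alpha\rceil$ and $S_{n}=g_{n-\alpha}\ast S_{\alpha}(\cdot)x$, followed by an appeal to the argument of Theorem~\ref{lokal-int-C-prim} (i.e.\ \cite[Theorem 3.1.8]{knjigah}) and Remark~\ref{Banach} for the generator inclusion. Your construction of the coherent family $S_{p_k}=g_{k}\ast_{0}S_{n}$ and the use of Lemma~\ref{integracija-tricky} to transfer (A) and (B) merely make explicit the details the paper leaves implicit.
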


We will accept the following definition an exponential pre-(C-DS).

\begin{defn}\label{qdfn}
Let ${\mathcal G}$ be a pre-(C-DS).
Then $\mathcal{G}$ is said to be an exponential
pre-(C-DS) iff there exists $\omega\in\mathbb{R}$ such that $e^{-\omega t}\mathcal{G}
\in\mathcal{S}'(L(E))$.
We use the shorthand pre-(C-EDS)  to denote an exponential
pre-(C-DS).
\end{defn}

We have the following fundamental result:

\begin{thm}\label{miana-exp}
Assume that $\alpha \geq 0$ and ${\mathcal A}$ generates an exponentially equicontinuous $\alpha$-times integrated $C$-semigroup $(S_{\alpha}(t))_{t\geq 0}.$
Define ${\mathcal G}$ through $\mathcal{G}_{\alpha}(\varphi)x:=\int^{\infty}_0W^{\alpha}_+\varphi(t)S_{\alpha}(t)x\,dt,$ $ x\in E,$ $ \varphi\in\mathcal{D}$. Then $\mathcal{G}$ is a pre-(C-EDS) whose integral generator contains ${\mathcal A}.$
\end{thm}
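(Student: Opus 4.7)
The plan is to reuse Theorem \ref{miana} for everything except the tempered growth condition: that theorem already yields that $\mathcal{G}$ is a pre-(C-DS) whose integral generator contains $\mathcal{A}$. So the only new content is to produce $\omega \in \mathbb{R}$ with $e^{-\omega \cdot}\mathcal{G} \in \mathcal{S}'(L(E))$.

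First I would reduce the fractional-order representation of $\mathcal{G}$ to an integer one. Set $n := \lceil \alpha \rceil$ and $S_n(t)x := (g_{n-\alpha} \ast_0 S_{\alpha}(\cdot)x)(t)$, $t \geq 0$; as recalled just before Theorem \ref{miana}, the family $(S_n(t))_{t \geq 0}$ is a global $n$-times integrated $C$-semigroup with the same integral generator $\mathcal{A}$, and the identity
\begin{equation*}
\int_0^{\infty} W_+^{\alpha}\varphi(t)\,S_{\alpha}(t)x\,dt = (-1)^n \int_0^{\infty} \varphi^{(n)}(t)\,S_n(t)x\,dt, \qquad \varphi \in \mathcal{D},\ x \in E,
\end{equation*}
holds. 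From the hypothesis that $\{e^{-\omega_0 t}S_{\alpha}(t) : t \geq 0\}$ is equicontinuous for some $\omega_0 \in \mathbb{R}$, one checks, by convolving the bound $p(S_{\alpha}(t)x) \leq c e^{\omega_0 t}q(x)$ with $g_{n-\alpha}$, that $(S_n(t))_{t \geq 0}$ is itself exponentially equicontinuous: for every $\omega > \max(\omega_0,0)$ one obtains $p(S_n(t)x) \leq C e^{\omega t}q(x)$, $t \geq 0$, with $C > 0$ depending on $p$ and $\omega$, and $q \in \circledast$ depending on $p$.

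Next, fix such an $\omega$ and define, for $\varphi \in \mathcal{S}$,
\begin{equation*}
\bigl(e^{-\omega \cdot}\mathcal{G}\bigr)(\varphi)x := (-1)^n \int_0^{\infty} \bigl(e^{-\omega t}\varphi(t)\bigr)^{(n)} S_n(t)x\,dt, \qquad x \in E.
\end{equation*}
Expanding by the Leibniz rule turns the right-hand side into a finite sum of terms $\int_0^{\infty}\varphi^{(k)}(t)\,e^{-\omega t}S_n(t)x\,dt$ with $0 \leq k \leq n$. Bounding $|\varphi^{(k)}(t)|$ by $(1+t)^{-2}$ times the Schwartz seminorm $\sup_{t \geq 0}(1+t)^2|\varphi^{(k)}(t)|$ and using the bound on $e^{-\omega t}S_n(t)$, one arrives at
\begin{equation*}
p\bigl((e^{-\omega \cdot}\mathcal{G})(\varphi)x\bigr) \leq C' \sum_{k=0}^{n}\sup_{t \geq 0}(1+t)^2|\varphi^{(k)}(t)|\cdot q(x),
\end{equation*}
with $C'$ independent of $\varphi$ and $x$. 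This is precisely the continuity needed for $e^{-\omega \cdot}\mathcal{G}$ to extend to an element of $\mathcal{S}'(L(E))$; consistency with the distributional definition on $\mathcal{D}$ is immediate by applying the fractional-to-integer identity recorded above to $\psi(t) := e^{-\omega t}\varphi(t) \in \mathcal{D}$.

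The main obstacle, as in Theorem \ref{miana}, is moving cleanly between the Weyl derivative $W_+^{\alpha}$ and the integer-order differentiation of the $n$-times integrated semigroup $(S_n(t))_{t \geq 0}$; once that reduction is in hand, the rest is a routine Schwartz-seminorm estimate, and the barreledness of $E$ together with the uniform boundedness principle guarantees that the resulting map takes values in $L(E)$ continuously.
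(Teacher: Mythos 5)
Your proposal is correct and follows essentially the route the paper intends: the paper states Theorem \ref{miana-exp} without a separate proof, relying on Theorem \ref{miana} (via the reduction $S_{n}(t)x=(g_{n-\alpha}\ast_{0}S_{\alpha}(\cdot)x)(t)$ with $n=\lceil \alpha\rceil$ and the Weyl-derivative identity) together with the observation that exponential equicontinuity of $(S_{\alpha}(t))_{t\geq 0}$ transfers to $(S_{n}(t))_{t\geq 0}$ and yields the $\mathcal{S}'(L(E))$-membership of $e^{-\omega t}\mathcal{G}$. Your Leibniz expansion and Schwartz-seminorm estimate supply exactly the tempered continuity bound that the paper leaves implicit, so nothing is missing.
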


\begin{rem}\label{s-t}
\begin{itemize}
\item[(i)]
Suppose that $\mathcal{G}$ is a pre-(C-EDS) generated by ${\mathcal A}$, $\omega\in\mathbb{R}$ and $e^{-\omega t}\mathcal{G}
\in\mathcal{S}'(L(E)).$ Suppose, further, that there exists a non-negative integer $n$ and a continuous function $V : {\mathbb R} \rightarrow L(E)$ satisfying that
\begin{align*}
\bigl \langle e^{-\omega t}\mathcal{G},\varphi \bigr \rangle =(-1)^{n}\int_{-\infty}^{\infty}\varphi^{(n)}(t)V(t)\, dt,\quad \varphi \in {\mathcal D},
\end{align*}
and that there exists a number $r\geq 0$ such that the operator family $\{(1+t^{r})^{-1}V(t): t\geq 0\}\subseteq L(E) $
is equicontinuous. Since $e^{-\omega \cdot}\mathcal{G}$ is a pre-(C-EDS) generated by ${\mathcal A}-\omega,$ the proof of Theorem \ref{lokal-int-C} shows that $(V(t))_{t\geq 0}$ is an exponentially equicontinuous
$n$-times integrated $C$-semigroup; by Theorem \ref{miana-exp}, the integral generator $\hat{{\mathcal A}}^{\omega}$ of $(V(t))_{t\geq 0}$
is contained in ${\mathcal A}-\omega.$ Define
$$
S_{n}(t)x:=e^{\omega t}V(t)x+\int \limits^{t}_{0} \sum
\limits_{k=1}^{\infty}\binom{n}{k}\frac{(-1)^{k}\omega^{k}(t-s)^{k-1}}{(k-1)!}
e^{\omega s}V(s)x\, ds.
$$
Arguing as in the proof of \cite[Theorem 2.5.1, Theorem 2.5.3]{knjigah}, we can prove that the MLO
$\hat{{\mathcal A}}^{\omega}+\omega \ ( \subseteq {\mathcal A})$ is the integral generator of an exponentially equicontinuous
$n$-times integrated $C$-semigroup $(S_{n}(t))_{t\geq 0}.$
\item[(ii)] The conclusions from Theorem \ref{miana-exp} and the first part of this remark can be reword for the classes of
$q$-exponentially equicontinuous
integrated $C$-semigroups and $q$-exponentially equicontinuous pre-(C-DS)'s; cf. \cite{C-ultra} for the notion.
\end{itemize}
\end{rem}

\begin{rem}\label{fundamentalna}
Suppose that ${\mathcal G}\in {\mathcal D}^{\prime}_{0}(L(E))$, ${\mathcal G}(\varphi)C=C{\mathcal G}(\varphi),$ $\varphi \in {\mathcal D}$
and ${\mathcal A}$ is a closed MLO on $E$ satisfying that $\mathcal{G}(\varphi){\mathcal A}\subseteq {\mathcal A}{\mathcal G}(\varphi),$ $\varphi \in {\mathcal D}$ and
\begin{equation}\label{dkenk}
\mathcal{G}\bigl(-\varphi'\bigr)x-\varphi(0)Cx\in {\mathcal A}\mathcal{G}(\varphi)x,\quad x\in E,\ \varphi \in {\mathcal D}.
\end{equation}
In \cite{C-ultra}, we have proved the following:
\begin{itemize}
\item[(i)] If ${\mathcal A}=A$ is single-valued, then ${\mathcal G}$ satisfies (C.S.1).
\item[(ii)] If ${\mathcal G}$ satisfies (C.S.2) holds, $C$ is injective and ${\mathcal A}=A$ is single-valued, then ${\mathcal G}$ is a (C-DS) generated by $C^{-1}AC.$
\item[(iii)] If $E$ is admissible and ${\mathcal A}=A$
is single-valued, then
the condition (C.S.2) automatically holds for ${\mathcal G}$.
\end{itemize}
As we have already seen, the conclusion from (ii) immediately implies that ${\mathcal A}=A$ must be single-valued and that
the operator $C$ must be injective.

Concerning the assertion (i), its validity is not true in multivalued case:
Let $C=I,$ let ${\mathcal A}\equiv E\times E,$ and let ${\mathcal G}\in {\mathcal D}^{\prime}_{0}(L(E))$  be arbitrarily chosen. Then ${\mathcal G}$ commutes with ${\mathcal A}$ and
 (\ref{dkenk}) holds but ${\mathcal G}$ need not satisfy (C.S.1).

Concerning the assertion (iii) in multivalued case, we can prove that the admissibility of state space $E$ implies that for each
$x\in {\mathcal N}({\mathcal G})$
there exist an integer $k\in {\mathbb N}$ and a finite sequence $(y_{i})_{0\leq i \leq k-1}$ in $D({\mathcal A})$ such that
$y_{i}\in {\mathcal A}y_{i+1}$ ($0\leq i \leq k-1$) and $Cx\in {\mathcal A}y_{0}\subseteq {\mathcal A}^{k+2}0.$
\end{rem}

Now we will reconsider some conditions introduced by J. L. Lions \cite{li121} in our new framework.
Suppose that $\mathcal{G}\in\mathcal{D}'_0(L(E))$  and ${\mathcal G}$ commutes with $C$. We analyze the following conditions for ${\mathcal G}$:
\begin{itemize}
\item[$(d_1)$] $\mathcal{G}(\varphi*\psi)C=\mathcal{G}(\varphi)\mathcal{G}(\psi)$, $\varphi,\,\psi\in\mathcal{D}_0$,
\item[$(d_3)$] $\mathcal{R}(\mathcal{G})$ is dense in $E$,
\item[$(d_4)$] for every $x\in\mathcal{R}(\mathcal{G})$, there exists a function $u_x\in C([0,\infty):E)$ so that
$u_x(0)=Cx$ and $\mathcal{G}(\varphi)x=\int_0^{\infty}\varphi(t)u_x(t)\,dt$, $\varphi\in\mathcal{D}$,
\item[$(d_5)$] $(Cx,\mathcal{G}(\psi)x)\in G(\psi_+)$, $\psi\in\mathcal{D},$ $x\in E$.
\end{itemize}
Suppose that $\mathcal{G}\in\mathcal{D}'_0(L(E))$  is a pre-(C-DS). Then it is clear that
$\mathcal{G}$
satisfies $(d_1),$ our previous considerations shows that $\mathcal{G}$
satisfies $(d_5);$ by the proof of \cite[Proposition 3.1.24]{knjigah}, we have that $\mathcal{G}$
also satisfies $(d_4).$ On the other hand, it is well known that $(d_1),$ $(d_4)$ and (C.S.2) taken together do not imply
(C.S.1), even in the case that $C=I;$ see e.g. \cite[Remark 3.1.20]{knjigah}. Furthermore, let $(d_1),$ $(d_3)$ and $(d_4)$ hold.
Then  $(d_5)$ holds, as well. In order to see  this, fix $x\in\mathcal{R}(\mathcal{G})$ and $\varphi\in\mathcal{D}$; then it suffices to show that
$(Cx,\mathcal{G}(\varphi)x)\in G(\varphi_+)$.  Suppose that $(\rho_n)$ is a regularizing sequence
and $u_x(t)$ is a function appearing in the formulation of the property $(d_4)$.
The arguments contained in the proof of \cite[Proposition 3.1.19]{knjigah} shows that, for every $\eta\in\mathcal{D}_{0}$, one has
\begin{align*}
\mathcal{G}(\rho_n)\mathcal{G}(\varphi_+*\eta)x&=\mathcal{G}((\varphi_+*\rho_n)*\eta)Cx
=\mathcal{G}(\eta)\mathcal{G}(\varphi_+*\rho_n)x\\
&=\mathcal{G}(\eta)\int\limits_0^{\infty}(\varphi_+*\rho_n)(t)u_x(t)\,dt
\\
& \to \mathcal{G}(\eta)\int_0^{\infty}\varphi(t)u_x(t)\,dt=\mathcal{G}(\eta)\mathcal{G}(\varphi)x,\;n\to\infty;\\
\mathcal{G}(\rho_n)\mathcal{G}(\varphi_+*\eta)x
&=\mathcal{G}(\varphi_+*\eta*\rho_n)Cx\to\mathcal{G}(\varphi_+*\eta)Cx,\;n\to\infty.
\end{align*}
Hence, $\mathcal{G}(\varphi_+*\eta)Cx=\mathcal{G}(\eta)\mathcal{G}(\varphi)x$ and $(d_5)$ holds, as claimed. On the other hand,
$(d_1)$ is a very simple consequence of $(d_5);$ to verify this, observe that for each $\varphi \in\mathcal{D}_0$ and $\psi \in\mathcal{D}$ we have $  \psi_{+}*\varphi=\psi \ast_0 \varphi =\varphi \ast_0 \psi,$ so that $(d_5)$ is equivalent to say that
$\mathcal{G}(\varphi \ast_0 \psi)C=\mathcal{G}(\varphi)\mathcal{G}(\psi)$ ($\varphi \in\mathcal{D}_0,$ $\psi \in\mathcal{D}$). In particular,
\begin{align}\label{line988}
\mathcal{G}(\varphi)\mathcal{G}(\psi)=\mathcal{G}(\psi)\mathcal{G}(\varphi),\quad \varphi \in\mathcal{D}_0,\ \psi \in\mathcal{D}.
\end{align}
Suppose now that $(d_5)$ holds. Let $\varphi \in\mathcal{D}_0$ and $\psi,\ \eta \in\mathcal{D}.$ Observing that $\psi_{+}\ast \eta_{+} \ast \varphi=(\psi \ast_{0}\eta)_{+} \ast \varphi$,  we have (cf. also \cite[Remark 3.13]{ku112}):
\begin{align}
\notag \mathcal{G}(\varphi)\mathcal{G}(\eta)\mathcal{G}(\psi)&= C\mathcal{G}(\eta_{+} \ast \varphi)\mathcal{G}(\psi)
\\\notag &=C\mathcal{G}(\psi_{+}\ast \eta_{+} \ast \varphi)=C\mathcal{G}\bigl( (\psi \ast_{0}\eta)_{+} \ast \varphi \bigr)C
\\\label{line98888} &=C\mathcal{G}(\varphi)\mathcal{G}( \psi \ast_{0}\eta )=\mathcal{G}(\varphi)\mathcal{G}( \psi \ast_{0}\eta )C.
\end{align}
By (\ref{line988})-(\ref{line98888}), we get
\begin{align}\label{line988888}
\mathcal{G}(\eta)\mathcal{G}(\psi)\mathcal{G}(\varphi)=\mathcal{G}( \psi \ast_{0}\eta )C\mathcal{G}(\varphi).
\end{align}
Due to (\ref{line988})-(\ref{line988888}), we have the following:
\begin{itemize}
\item[(i)] $(d_5)$ and $(d_3)$ together imply (C.S.1); in particular, $(d_1),$ $(d_3)$ and $(d_4)$ together imply  (C.S.1). This is an extension of \cite[Proposition 3.1.19]{knjigah}.
\item[(ii)] $(d_5)$ and $(d_2)$ together imply that ${\mathcal G}$ is a (C-DS); in particular, ${\mathcal A}=A$ must be single-valued and $C$ must be injective.
\end{itemize}
On the other hand, $(d_5)$ does not imply (C.S.1) even in the case that $C=I.$ A simple counterexample is $\mathcal{G}\in\mathcal{D}'_0(L(E))$
given by  ${\mathcal G}(\varphi)x:=\varphi(0)x,$ $x\in E,$ $\varphi \in {\mathcal D}$.

The exponential region $E(a,b)$ has been defined for the first time by W. Arendt, O. El--Mennaoui and V. Keyantuo in \cite{a22}:
$$
E(a,b):=\Bigl\{\lambda\in\mathbb{C}:\Re\lambda\geq b,\:|\Im\lambda|\leq e^{a\Re\lambda}\Bigr\} \ \ (a,\ b>0).
$$

Now we are able to state the following theorem:

\begin{thm}\label{herbs}
Let $a>0$, $b>0$ and $\alpha>0.$ Suppose that ${\mathcal A}$ is a closed \emph{MLO} and, for every $\lambda$ which belongs to the set $
E(a,b)
,$ there exists
an operator $F(\lambda)\in L(E)$ so that $F(\lambda){\mathcal A}\subseteq {\mathcal A} F(\lambda),$ $\lambda \in E(a,b),$ $F(\lambda)x\in (\lambda -{\mathcal A})^{-1}Cx,$ $\lambda \in  E(a,b),$ $x\in E,$
$F(\lambda)C=C F(\lambda),$ $\lambda \in  E(a,b),$
$F(\lambda)x-Cx=F(\lambda)y,$ whenever $\lambda \in  E(a,b)$ and $(x,y)\in {\mathcal A},$ and that the mapping $\lambda \mapsto F(\lambda)x$ is analytic on $\Omega_{a,b}$ and continuous on $\Gamma_{a,b},$
where $\Gamma_{a,b}$ denotes the upwards oriented boundary of $E(a,b)$
and $\Omega_{a,b}$ the open region which lies to the right of $\Gamma_{a,b}.$ Let the operator family $\{(1+|\lambda|)^{-\alpha}F(\lambda) : \lambda \in E(a,b)\}\subseteq L(E)$ be equicontinuous.
Set
\begin{align*}
\mathcal{G}(\varphi)x:=(-i)\int_{\Gamma_{a,b}}\hat{\varphi}(\lambda)F(\lambda)x\,d\lambda,
\;\;x\in E,\;\varphi\in\mathcal{D}.
\end{align*}
Then $\mathcal{G}$ is a pre-(C-DS) generated by an extension of  ${\mathcal A}. $
\end{thm}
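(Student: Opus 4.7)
The plan is to verify, in order, that $\mathcal{G} \in \mathcal{D}'(L(E))$, that $\operatorname{supp}(\mathcal{G}) \subseteq [0,\infty)$, that $C\mathcal{G}=\mathcal{G}C$, that (C.S.1) holds, and finally that the generator $G(-\delta')$ contains $\mathcal{A}$.

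\emph{Well-definedness and support.} For fixed $x\in E$ and $\varphi\in\mathcal{D}$, integration by parts in the Laplace-type transform yields, for every $k\in\mathbb{N}$, a bound of the form $|\hat{\varphi}(\lambda)|\leq C_{k}|\lambda|^{-k}e^{N\max(0,-\operatorname{Re}\lambda)}$ on any right half-plane (with $N$ depending on $\operatorname{supp}\varphi$). Since $F(\lambda)$ has growth $O((1+|\lambda|)^{\alpha})$ and $\Gamma_{a,b}$ has arc-length element growing like $e^{a\operatorname{Re}\lambda}$, choosing $k$ large enough makes $\hat{\varphi}(\lambda)F(\lambda)x$ absolutely integrable in every continuous seminorm of $E$, with the estimate being uniform on bounded sets of $\mathcal{D}$. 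This gives $\mathcal{G}\in\mathcal{D}'(L(E))$. If $\varphi$ is supported in $(-\infty,-\varepsilon]$ for some $\varepsilon>0$, one gains an additional factor $e^{-\varepsilon\operatorname{Re}\lambda}$; together with the arbitrary number of integrations by parts this forces superpolynomial decay as $\operatorname{Re}\lambda\to +\infty$. Using the analyticity of $F(\cdot)x$ on $\Omega_{a,b}$, deform $\Gamma_{a,b}$ to a vertical line $\operatorname{Re}\lambda=R$ via Cauchy's theorem; the connecting arcs and the terminal line both vanish as $R\to\infty$. Hence $\mathcal{G}(\varphi)x=0$, so $\mathcal{G}\in\mathcal{D}'_{0}(L(E))$. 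The relation $\mathcal{G}C=C\mathcal{G}$ is immediate from $F(\lambda)C=CF(\lambda)$.

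\emph{The generator extends $\mathcal{A}$.} Let $(x,y)\in\mathcal{A}$. The $C$-resolvent condition $F(\lambda)x\in(\lambda-\mathcal{A})^{-1}Cx$ unpacks to $\lambda F(\lambda)x-Cx\in\mathcal{A}F(\lambda)x$, and combined with the hypothesis relating $F(\lambda)y$ to $F(\lambda)x$ and $Cx$ yields a pointwise identity expressing $F(\lambda)y$ in terms of $\lambda F(\lambda)x$ and $Cx$. Since $\mathcal{A}$ is closed, apply Lemma \ref{integracija-tricky} to the integrable pair $\lambda\mapsto\hat{\varphi}(\lambda)F(\lambda)x$ and $\lambda\mapsto\hat{\varphi}(\lambda)[\lambda F(\lambda)x-Cx]$: the integrated version lands in $\mathcal{A}\mathcal{G}(\varphi)x$. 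Rewriting the first integral via $\widehat{\varphi'}(\lambda)=\pm\lambda\hat{\varphi}(\lambda)$ and evaluating $\int_{\Gamma_{a,b}}\hat{\varphi}(\lambda)\,d\lambda$ by Laplace/Cauchy inversion (which gives a constant multiple of $\varphi(0)$, and in particular vanishes when $\varphi\in\mathcal{D}_{0}$), one obtains $\mathcal{G}(\varphi)y=\mathcal{G}(-\varphi')x$ for all $\varphi\in\mathcal{D}_{0}$, which is precisely $(x,y)\in G(-\delta')$. Thus $\mathcal{A}\subseteq G(-\delta')$.

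\emph{Verification of} (C.S.1). The cleanest route is to check the equivalent identity of Proposition \ref{kisinski}: using $\widehat{\varphi'}(\lambda)=\pm\lambda\hat{\varphi}(\lambda)$ and Fubini (justified by the joint absolute integrability on $\Gamma_{a,b}\times\Gamma_{a,b}$ obtained in Step 1 applied twice), the left-hand side $\mathcal{G}(\varphi')\mathcal{G}(\psi)-\mathcal{G}(\varphi)\mathcal{G}(\psi')$ collapses to a double integral of $(\mu-\lambda)\hat{\varphi}(\lambda)\hat{\psi}(\mu)F(\lambda)F(\mu)$. Deforming one of the two contours so that $\Gamma_{a,b}^{(\mu)}$ lies strictly to the right of $\Gamma_{a,b}^{(\lambda)}$ (possible by analyticity of $F$ in $\Omega_{a,b}$) and invoking a $C$-resolvent-type two-variable identity of the form $(\lambda-\mu)F(\lambda)F(\mu)=CF(\mu)-CF(\lambda)$ converts the double integral into a sum of single integrals, each of which produces a factor $\varphi(0)$ or $\psi(0)$ by the same Cauchy inversion used in the previous step. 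Matching signs yields the desired right-hand side $\psi(0)\mathcal{G}(\varphi)C-\varphi(0)\mathcal{G}(\psi)C$.

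\emph{Main obstacle.} The delicate point is Step 3, the $C$-resolvent identity $(\lambda-\mu)F(\lambda)F(\mu)=CF(\mu)-CF(\lambda)$ in the multivalued, non-injective-$C$ setting. In the single-valued injective case it follows from the customary manipulation, but here $F(\lambda)$ is merely an $L(E)$-valued section of the multivalued object $(\lambda-\mathcal{A})^{-1}C$, so the identity must be extracted from the hypotheses $F(\lambda)\mathcal{A}\subseteq\mathcal{A}F(\lambda)$, $F(\lambda)C=CF(\lambda)$, and the pointwise relation $\lambda F(\lambda)x-Cx\in\mathcal{A}F(\lambda)x$, together with the relation $F(\lambda)y\leftrightarrow F(\lambda)x,Cx$ for $(x,y)\in\mathcal{A}$ applied to the pair $(F(\mu)x,\mu F(\mu)x-Cx)\in\mathcal{A}$. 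Once this resolvent equation is in hand, the rest of Step 3, and consequently the theorem, follows routinely.
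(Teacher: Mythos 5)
Your proposal follows a genuinely different route from the paper. The paper's proof is a reduction to machinery established earlier in the text: the hypotheses on $F(\lambda)$ imply (by the cited result from \cite{catania}, essentially the Arendt--El-Mennaoui--Keyantuo construction) that ${\mathcal A}$ subgenerates a local $n$-times integrated $C$-semigroup $(S_{n}(t))_{t\in[0,a(n-\alpha-1))}$ for every $n>\alpha+1$; one then checks that ${\mathcal G}(\varphi)x=(-1)^{n}\int_{-\infty}^{\tau}\varphi^{(n)}(t)S_{n}(t)x\,dt$ for $\varphi\in{\mathcal D}_{(-\infty,a(n-\alpha-1))}$, and Theorem \ref{lokal-int-C-prim} together with Remark \ref{Banach}(i)--(ii) delivers both (C.S.1) and the statement about the generator. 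You instead verify (C.S.1) directly through the Kisy\'nski-type identity of Proposition \ref{kisinski} and a two-variable resolvent equation for $F$. That part of your plan is sound, and the issue you flag as the ``main obstacle'' is in fact no obstacle at all: since $F(\mu)x\in(\mu-{\mathcal A})^{-1}Cx$ means exactly $\bigl(F(\mu)x,\,\mu F(\mu)x-Cx\bigr)\in{\mathcal A}$, feeding this pair into the functional equation $\lambda F(\lambda)x'-Cx'=F(\lambda)y'$ (the stated hypothesis, with the evidently missing factor $\lambda$ restored) and using $F(\lambda)C=CF(\lambda)$ gives $(\lambda-\mu)F(\lambda)F(\mu)x=CF(\mu)x-CF(\lambda)x$ immediately, precisely as you sketch. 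The subsequent Fubini and separation into single integrals are justified because the Paley--Wiener decay of $\hat\varphi$ translates into exponential decay in $\RE\lambda$ along the exponential arcs of $\Gamma_{a,b}$, and $\int_{\Gamma_{a,b}}\hat\varphi(\lambda)\,d\lambda$ is a fixed multiple of $\varphi(0)$ because $\hat\varphi$ is \emph{entire}, so that contour may be freely deformed to a vertical line. The direct approach buys a self-contained argument that does not lean on \cite{catania}; the paper's approach buys the local integrated semigroups themselves, which it reuses elsewhere.

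There is, however, one genuine gap: your proof that $\operatorname{supp}({\mathcal G})\subseteq[0,\infty)$. You propose to ``deform $\Gamma_{a,b}$ to a vertical line $\RE\lambda=R$,'' but $F$ is only defined on $E(a,b)$ and analytic on $\Omega_{a,b}$, so the contour cannot leave $E(a,b)$; the only admissible deformation replaces $\Gamma_{a,b}$ by the boundary of $E(a,b)\cap\{\RE\lambda\geq R\}$, which contains a vertical segment of length $2e^{aR}$. On that segment the Paley--Wiener bound for a $\varphi$ supported in $[-N,-\eps]$ is $|\hat\varphi(\lambda)|\leq C_{k}(1+|\lambda|)^{-k}e^{N\RE\lambda}$ with $|\lambda|$ possibly as small as $R$, so the segment contributes $O\bigl(R^{1+\alpha-k}e^{NR}\bigr)$, which does \emph{not} tend to zero for any fixed $k$: the polynomial gain in $|\lambda|$ cannot absorb the factor $e^{NR}$ there. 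The repair is exactly the paper's device: write $\hat\varphi(\lambda)=\lambda^{-n}\widehat{\varphi^{(n)}}(\lambda)$, apply Fubini (legitimate once $\operatorname{supp}(\varphi)$ is bounded above by $a(n-\alpha-1)$), and reduce the claim to $S_{n}(t)=0$ for $t\leq 0$, where the rightward deformation \emph{does} close because the kernel $e^{\lambda t}\lambda^{-n}$ with $t\leq0$ supplies decay that is uniform over the whole connecting segment. In other words, the support property forces you back through the local integrated semigroups after all, which is why the paper routes the entire proof through Theorem \ref{lokal-int-C-prim}.
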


\begin{proof}
Arguing as in non-degenerate case \cite{C-ultra}, we can prove with the help of Lemma \ref{integracija-tricky} that $\mathcal{G}\in {\mathcal D}^{\prime}_{0}(L(E))$ as well as that $\mathcal{G}$ commutes with $C$ and ${\mathcal A}.$ The prescribed assumptions imply by
\cite[Theorem 3.23]{catania} (cf. also \cite[Theorem 2.7.2(iv)]{knjigah}) that for each $n\in {\mathbb N}$ with $n>\alpha+1$ the MLO ${\mathcal A}$ subgenerates a local $n$-times integrated $C$-semigroup
$(S_{n}(t))_{t\in [0,a(n-\alpha-1))}.$ It is straightforward to prove \cite{C-ultra} that
$$
{\mathcal G}(\varphi)x=(-1)^{n}\int_{-\infty}^{\tau}\varphi^{(n)}(t)S_{n}
(t)x\, dt,\quad x\in E,\ \varphi \in {\mathcal D}_{(-\infty,a(n-\alpha-1))}.
$$
Now the conclusion directly follows from Theorem \ref{lokal-int-C-prim} and Remark \ref{Banach}(i)-(ii).
\end{proof}

\begin{rem}\label{filipa-1}
\begin{itemize}
\item[(i)]
If $C$ is injective, ${\mathcal A}=A$ is single-valued, $\rho_{C}(A) \subseteq E(a,b)$ and $F(\lambda)=(\lambda -{\mathcal A})^{-1}C,$ $\lambda \in E(a,b),$ then ${\mathcal G}$ is a (C-DS) generated by $C^{-1}AC$
(\cite{C-ultra}).
Even in the case that $C=I,$ the integral generator ${\mathcal A}$ of ${\mathcal G},$ in multivalued case, can strictly contain $C^{-1}{\mathcal A}C;$ see Remark \ref{xcvbnm-prim}(i).
\item[(ii)] Let ${\mathcal A}$ be a closed MLO, let $C$ be injective and commute with ${\mathcal A},$ and let $\rho_{C}({\mathcal A}) \subseteq E(a,b).$ Then the choice $F(\lambda)=(\lambda -{\mathcal A})^{-1}C,$ $\lambda \in E(a,b)$
is always possible; in this case, we have ${\mathcal A}0\subseteq
N({\mathcal G}(\varphi)),$ $\varphi \in {\mathcal D}$  (\cite{FKP}).
\end{itemize}
\end{rem}

Local integrated semigroups generated by multivalued linear operators (see e.g. \cite[Example 3.2.11(i)]{FKP}) can be used for construction of pre-(DS)'s.
In \cite[Theorem 3.2.21]{FKP} and \cite[Example 3.2.23]{FKP}, we have investigated the entire solutions
of backward heat Poisson equation, showing the existence of
an entire $C$-regularized semigroup ($C\in L(L^{p}(\Omega))$ non-injective) generated by the multivalued linear operator $\Delta \cdot m(x)^{-1}$ in $L^{p}(\Omega),$
where $\Omega$ is a bounded domain in ${\mathbb R}^{n}$.
This example can serve us to construct an important example of a pre-(C-DS); cf. also \cite[Example 3.24]{catania}. Examples of exponentially bounded integrated semigroups generated by multivalued linear operators can be found in \cite[Chapter II-III, Section 5.8]{faviniyagi} and these examples can be used for construction of exponential pre-(DS)'s. 
Also by Proposition \ref{kuki}(iii) the duals of non-dense pre-(C-DS)'s are pre-($C^{\ast}$-DS)'s on $E^{\ast}$, so this is another way of constructing of degenerate $C$-distribution semigroups.

By Proposition \ref{kuki}(iii), the duals of  non-dense
(C-DS)'s.


\begin{thebibliography}{99}

\bibitem{a22}
\textsc{W. Arendt, O. El--Mennaoui, V. Keyantuo},
\emph{Local integrated semigroups: evolution with jumps of regularity},
J. Math. Anal. Appl. \textbf{186} (1994), 572--595.

\bibitem{a43}
\textsc{W. Arendt, C.\,J.\,K. Batty, M. Hieber, F. Neubrander},
\emph{Vector-valued Laplace Transforms and Cauchy Problems},
Monographs in Mathematics {\bf 96}, Birkh\"auser/Springer Basel AG, Basel,
2001.

\bibitem{baskakov-chern}
\textsc{A. G. Baskakov, K. I. Chernyshov,}
\emph{On distribution semigroups
with a singularity at zero and bounded solutions of
differential inclusions,}
Math. Notes
\textbf{1} (2006), 19--33.

\bibitem{b42}
\textsc{R. Beals},
\emph{Semigroups and abstract Gevrey spaces},
J. Funct. Anal. \textbf{10} (1972), 300--308.

\bibitem{carol}
\textsc{R. W. Carroll, R. W. Showalter,}
\emph{Singular and
Degenerate Cauchy Problems,} Academic Press, New York, 1976.

\bibitem{cha}
\textsc{J. Chazarain},
\emph{Probl\'emes de Cauchy abstraites et applications\'a quelques probl\'emes mixtes},
J. Funct. Anal. \textbf{7} (1971), 386--446.

\bibitem{ci1}
\textsc{I. Cior\u anescu},
\emph{Beurling spaces of class $(M_p)$ and ultradistribution semi-groups},
Bull. Sci. Math. \textbf{102} (1978), 167--192.

\bibitem{cizi}
\textsc{I. Cioranescu, L. Zsido,}
\emph{$\omega$-Ultradistributions and Their
Applications to Operator Theory,} in: Spectral Theory, Banach Center
Publications \textbf{8}, Warsawza 1982, 77--220.

\bibitem{cross}
\textsc{R. Cross,}
{\it Multivalued Linear Operators,}
Marcel Dekker Inc., New York, 1998.

\bibitem{l1}
\textsc{R. deLaubenfels},
\emph{Existence Families, Functional Calculi and Evolution Equations},
Lect. Notes Math. \textbf{1570}, Springer, New York, 1994.

\bibitem{fat1}
\textsc{H. O. Fattorini},
\emph{The Cauchy Problem}, Addison-Wesley, 1983. MR84g:34003.

\bibitem{faviniyagi}
\textsc{A. Favini, A. Yagi,}
{\it Degenerate Differential Equations in Banach Spaces,}
Chapman and Hall/CRC
Pure and Applied Mathematics, New York, 1998.

\bibitem{ki90}
\textsc{J. Kisy\'nski},
\emph{Distribution semigroups and one parameter semigroups},
Bull. Polish Acad. Sci. \textbf{50} (2002), 189--216.


\bibitem{k91}
\textsc{H. Komatsu},
\emph{Ultradistributions, I. Structure theorems and a characterization},
J. Fac. Sci. Univ. Tokyo Sect. IA Math. \textbf{20} (1973), 25--105.

\bibitem{k911}
\textsc{H. Komatsu},
\emph{Ultradistributions, II. The kernel theorem and ultradistributions with support in a manifold},
J. Fac. Sci. Univ. Tokyo Sect. IA Math. \textbf{24} (1977), 607--628.

\bibitem{k82}
\textsc{H. Komatsu},
\emph{Ultradistributions, III. Vector valued ultradistributions. The theory of kernels},
J. Fac. Sci. Univ. Tokyo Sect. IA Math. \textbf{29} (1982), 653--718.

\bibitem{k92}
\textsc{H. Komatsu},
\emph{Operational calculus and semi-groups of operators},
in: Functional Analysis and Related topics (Kyoto),
Springer, Berlin, 213--234, 1991.

\bibitem{knjigah}
\textsc{M. Kosti\'c}, \emph{Generalized Semigroups and Cosine Functions},
Mathematical Institute SANU, Belgrade, 2011.

\bibitem{knjigaho}
\textsc{M. Kosti\'c}, \emph{Abstract Volterra Integro-Differential Equations},
Taylor and Francis Group/CRC Press/Science Publishers, Boca Raton, Fl., 2015.

\bibitem{FKP}
\textsc{M. Kosti\'c,}
{\it Abstract Degenerate Volterra Integro-Differential Equations: Linear Theory and Applications},
Book Manuscript, 2016.

\bibitem{C-ultra}
\textsc{M. Kosti\' c, S. Pilipovi\' c, D. Velinov,}
\emph{$C$-Distribution semigroups and $C$-ultradistribution semigroups in locally convex spaces,}
Siberian Math. J., accepted.

\bibitem{catania}
\textsc{M. Kosti\'c},
\emph{Degenerate $K$-convoluted $C$-semigroups and degenerate $K$-convoluted $C$-cosine functions in locally convex spaces,}
preprint.


\bibitem{kothe1}
\textsc{G.~K\" othe},
\emph{Topological Vector Spaces I}, Springer-Verlag, Berlin, Heidelberg, New York, 1969.



\bibitem{ku112}
\textsc{P.\,C. Kunstmann},
\emph{Distribution semigroups and abstract Cauchy problems},
Trans. Amer. Math. Soc. \textbf{351} (1999), 837--856.

\bibitem{ku113}
\textsc{P.\,C. Kunstmann},
\emph{Banach space valued ultradistributions and applications to abstract Cauchy problems},
preprint.

\bibitem{li121}
\textsc{J.\,L. Lions},
\emph{Semi-groupes distributions},
Port. Math. \textbf{19} (1960), 141--164.

\bibitem{isna-maiz}
\textsc{I. Maizurna,}
\emph{Semigroup Methods For Degenerate Cauchy Problems And Stochastic Evolution Equations,}
PhD Thesis, Univeristy of Adelaide, 1999.

\bibitem{martinez}
\textsc{C. Martinez, M. Sanz},
\emph{The Theory of Fractional Powers of Operators},
North--Holland Math. Stud. \textbf{187}, Elseiver, Amsterdam, 2001.

\bibitem{meise}
\textsc{R. Meise, D. Vogt},
\emph{Introduction to Functional Analysis},
Translated from the German by M.\,S. Ramanujan and revised by the authors.
Oxf. Grad. Texts Math., Clarendon Press, New York, 1997.

\bibitem{me152}
\textsc{I.\,V. Melnikova, A.\,I. Filinkov},
\emph{Abstract Cauchy Problems: Three Approaches},
Chapman Hall/CRC, Boca Raton, London, New York, Washington, 2001.

\bibitem{me153}
\textsc{I. V. Melnikova,}
\emph{The Cauchy problem for differential inclusion in Banach space and distribution spaces,}
Siberian Math. J. \textbf{42} (2001), 751--765.

\bibitem{me155}
\textsc{I. V. Melnikova, U. A. Anufrieva, V. Yu. Ushkov,}
\emph{Degenerate distribution semigroups and well-posedness of the Cauchy problem,}
Integral Transform Special Functions \textbf{6} (1998), 247--256.

\bibitem{pilip}
\textsc{S. Pilipovi\'c},
\emph{Tempered ultradistributions},
Boll. Un. Mat. Ital. \textbf{7} (1988), 235--251.

\bibitem{sch16}
\textsc{L. Schwartz},
\emph{Theorie des Distributions}, 2 vols.,
Hermann, Paris, 1950--1951.

\bibitem{1964}
\textsc{R. Shiraishi, Y. Hirata},
\emph{Convolution maps and semi-group distributions},
J. Sci. Hiroshima Univ. Ser. A-I \textbf{28} (1964), 71--88.

\bibitem{svir-fedorov}
\textsc{G. A. Sviridyuk, V. E. Fedorov,}
\emph{Linear Sobolev Type Equations and Degenerate Semigroups of Operators,}
Inverse and Ill-Posed Problems (Book {\bf 42}), VSP, Utrecht, Boston, 2003.

\bibitem{w241}
\textsc{S. Wang},
\emph{Quasi-distribution semigroups and integrated semigroups},
J. Funct. Anal. \textbf{146} (1997), 352--381.



\end{thebibliography}
\end{document}